\newcommand\scalemath[2]{\scalebox{#1}{\mbox{\ensuremath{\displaystyle #2}}}}
\numberwithin{Theorem}{section}
\numberwithin{Definition}{section}
\numberwithin{Lemma}{section}
\numberwithin{Algorithm}{section}
\numberwithin{equation}{section}
\newtheorem{theorem}{Theorem}[section]
\newtheorem{remark}{Remark}
	\title{General-purpose preconditioning for regularized interior point methods\\$\ $\\ 
\normalsize ERGO Technical Report 21-004}
\author{Jacek Gondzio \and Spyridon Pougkakiotis \and John W. Pearson}
\begin{document}

\maketitle

			%\begin{changemargin}{0.8cm}{0.8cm} 
\begin{abstract}
\par In this paper we present general-purpose preconditioners for regularized augmented systems, and their corresponding normal equations, arising from optimization problems. We discuss positive definite preconditioners, suitable for CG and MINRES. We consider ``sparsifications" which avoid situations in which eigenvalues of the preconditioned matrix may become complex. Special attention is given to systems arising from the application of regularized interior point methods to linear or nonlinear convex programming problems.
\end{abstract}
\section{Introduction}
\par In this paper, we are concerned with applying Krylov-subspace methods for the efficient solution of systems of the following form:
\begin{equation} \label{General Saddle point system}
\underbrace{\begin{bmatrix}
-(Q + \rho I_n) & A^\top\\
A & \delta I_m
\end{bmatrix}}_{K} \begin{bmatrix}
\Delta x\\
\Delta y
\end{bmatrix}
= \begin{bmatrix}
\xi_1\\
\xi_2
\end{bmatrix},
\end{equation}
\noindent where $A \in \mathbb{R}^{m\times n}$ (with $m \leq n$), $Q \succeq 0 \in \mathbb{R}^{n\times n}$, $I_n$ is the identity matrix of size $n$, and $\delta,\ \rho > 0$.  Such systems arise in a plethora of applications \cite{BenGolLieActaNum}, which go far beyond optimization. However, in this paper we restrict the discussion to the case of regularized systems arising in Interior Point Methods (IPMs) for optimization
\cite{AltGondzio:OMS,ArmandOmheniJOTA,FriedOrbanMathProgComp,OrbanNumAlg,exIP-PMM:PougGond,REG:SaundersTomlin:TechReport,SIAMOptVander}. Due to the potential large dimensions of the systems, they are often solved by means 
of iterative techniques, usually from the family of Krylov-subspace methods \cite{GreenbaumSIAM}.
To guarantee efficiency of such methods the possibly ill-conditioned 
system (\ref{General Saddle point system}) often needs to be appropriately preconditioned 
and, indeed, there exists a rich literature which addresses the issue
(see the discussions in \cite{BellaviaetalCOAP,BenGolLieActaNum,BergGonZilCOAP,oldCOAPBergGondZill,SIMAX:Dollar,COAP:DollarGouldSchildersWathen,diSerafinoOrbanSIAMSCIJ}, and the references therein). 
\par Multiple preconditioning approaches have been developed in the literature, 
used to accelerate the associated iterative methods. These can be divided into positive definite 
(e.g. \cite{BenGolLieActaNum,BCOCOAP,GillMurrayPonceleSIMAX,MurphyGolubWathenSIAMSciComp,NotaySIAM,OLIVSORESLAA,SilvWathenSIAMNumAnal}) 
and indefinite ones
(e.g. \cite{SIMAX:Dollar,IpsenSIAMSciComp,KellerGouldWathenSIMAX,MurphyGolubWathenSIAMSciComp,SaadSchultzSIAMSciComp}). 
The latter are often employed within long-recurrence non-symmetric solvers (such as the Generalized Minimal RESidual method \cite{SaadSchultzSIAMSciComp}), while the former can be used within short-recurrence methods (such as the MINimal RESidual method \cite{PaigeSaundersSIAMNumAnal}). 
A comprehensive study of saddle point systems and their associated ``optimal" preconditioners can be found in \cite{BenGolLieActaNum}. Indefinite preconditioners are significantly more difficult to analyze and a simple spectral analysis is not sufficient to deduce their effectiveness (see \cite{StrakosGreenbaumPtakSIMAX}). On the other hand, positive definite preconditioners are often easier to analyze, and the eigenvalues of the preconditioned matrices allow one to theoretically compare different preconditioning approaches.
\par  In this paper, we are focused on systems arising from the application of regularized interior point methods for the solution of an arbitrary convex programming problem. In this case, $Q$ represents the Hessian of the primal barrier problem's objective function (or the Hessian of the Lagrangian in the nonlinear programming case), $A$ represents the constraint matrix (or the Jacobian of the constraints in the nonlinear programming case), while $\rho$ and $\delta$ are the primal and dual regularization parameters, respectively. We note that the IPM may contribute a term to the $(1,1)$ or the $(2,2)$ block of \eqref{General Saddle point system}, depending on the form of the constraints and non-negativity variables. Here we assume that the term is added in the $(1,1)$ block. For example, the matrix $Q$ may be written as $Q \coloneqq H + \Theta^{-1}$, where $H$ is the Hessian of the Lagrangian and $\Theta \coloneqq X Z^{-1}$ is a diagonal IPM scaling matrix (assuming $x,\ z$ are the primal and dual non-negative variables, respectively, while $X,\ Z$ denote the diagonal matrices with diagonal entries taken from vectors $x,\ z$, respectively) which originates from the use of the logarithmic barrier. 
\par We present positive definite preconditioning approaches that can be used 
within MINRES \cite{PaigeSaundersSIAMNumAnal} or the Conjugate Gradient 
method \cite{HestenesSteifelPCG}, and we provide some basic spectral 
analysis results for the associated preconditioned systems. 
More specifically, we consider preconditioners which are derived 
by ``sparsifications" of system (\ref{General Saddle point system}), 
that is, by dropping specific entries from sparse matrices $Q$ and $A$, 
thus making them more sparse and hence easier to factorize. 
Various such approaches have been proposed to date and include: preconditioners which exploit an early guess of a basic--nonbasic partition 
of variables to drop columns from $A$ \cite{OLIVSORESLAA}, 
constraint preconditioners \cite{oldCOAPBergGondZill,SIMAX:Dollar,COAP:DollarGouldSchildersWathen,diSerafinoOrbanSIAMSCIJ}, 
inexact constraint preconditioners \cite{BergGonZilCOAP} which drop specific 
entries in the matrices $Q$ and $A$, and of course a plethora of preconditioners 
which involve various levels of incomplete Cholesky factorizations 
of the matrix in \eqref{General Saddle point system}, see for example \cite{BCOCOAP}. 
The literature on preconditioners is growing rapidly and we refer the interested reader 
to \cite{BenziCompPhys,BenGolLieActaNum,DappuzoSimonediSerafinoCOAP,PearsonPestanaGAMM,WathenActaNum} and the references therein, for a detailed discussion. 
\par We consider dropping off-diagonal entries of $Q$, but restrict the elimination 
of entries in $A$ only to the removal of complete columns. Additionally, we consider sparsifying parts of rows of the Schur complement corresponding to system \eqref{General Saddle point system}. 
Such a strategy guarantees that we avoid situations in which eigenvalues 
of the preconditioned matrix may become complex (such as those employed 
in \cite{BergGonZilCOAP}), which as a consequence would have required us to employ 
non-symmetric Krylov methods. 
\par In order to construct the preconditioners, by following \cite{NLAA:BergGondMartPearPoug}, we take advantage of the properties 
of the logarithmic barrier, that allow us to know in advance which columns 
of the problem matrix are important and which are less influential. In particular, assuming the aforementioned representation of $Q$ as $Q = H + \Theta^{-1}$, the logarithmic barrier indicates which variables of the problem are likely to be inactive in the solution. More precisely, the variables are naturally split into ``basic"--$\mathcal{B}$ (not in the simplex sense), ``non-basic"--$\mathcal{N}$, and ``undecided"--$\mathcal{U}$. Hence, as IPMs progress towards optimality, we expect the following partition of the diagonal barrier matrix $\Theta^{-1}$:
\begin{align*} 
\forall j \in \mathcal{N}: \left(\Theta^{\left(\mathcal{N},\mathcal{N}\right)}\right)^{-1} = \bm{\Theta}\left(\mu^{-1}\right),\quad
\forall j \in \mathcal{B}:  \left(\Theta^{\left(\mathcal{B},\mathcal{B}\right)}\right)^{-1} = \bm{\Theta}\left(\mu\right),\quad  \forall j \in \mathcal{U}: \left(\Theta^{\left(\mathcal{U},\mathcal{U}\right)}\right)^{-1} = \bm{\Theta}\left(1\right),
\end{align*} 
\noindent where $\mu$ is the barrier parameter (and is such that $\mu \rightarrow 0$), $\mathcal{N}$, $\mathcal{B}$ and $\mathcal{U}$ are mutually disjoint, and $\mathcal{N}\cup\mathcal{B}\cup \mathcal{U} = \{1,\ldots,n\}$, while $\bm{\Theta}(\cdot)$ denotes that two positive quantities are of the same order of magnitude (see the notation section at the end of the introduction). Given the large magnitude of the diagonal elements of $Q$ for any $j \in \mathcal{N}$ (assuming $\mu$ is close to zero), we expect that the corresponding columns of $A$ (i.e. $A^{(:,\mathcal{N})}$) will not contribute important information, and thus can be set to zero when constructing a preconditioner for \eqref{General Saddle point system}. In \cite{NLAA:BergGondMartPearPoug}, the Hessian was approximated by its diagonal. In this paper, we extend this work by allowing the utilization of non-diagonal Hessian information. More specifically, we showcase how to analyze, construct, and apply the inverse of preconditioners in which we only drop non-diagonal elements of $Q$ corresponding to diagonal elements in $\mathcal{N}$. We should note at this point that such a splitting of $Q$ occurs in other second-order optimization methods as well, such as those based on augmented Lagrangian strategies (e.g. see \cite{SIAMOpt:Lietal}).
\par Furthermore, we discuss some approaches for dealing with problems for which 
the matrix $A$ may contain a subset of dense columns or rows. Any dense column or row in $A$ can pose great difficulties when trying to factorize the associated saddle-point matrix (or a preconditioner approximating it). Thus, it is desirable to alleviate the dangers of such columns or rows, by appropriate ``sparsifications" of the preconditioner, allowing us to reduce the memory requirements of applying its inverse. 
\par All such ``sparsifications" are captured in a general result presented 
in Section \ref{Section General Purpose Preconditioner} which provides 
the spectral analysis of an appropriate preconditioned normal equations' matrix. The main theorem sheds 
light on consequences of sparsifying rows of the normal equations corresponding to \eqref{General Saddle point system}, or dropping columns of $A$, and demonstrates 
that the former might produce a larger number of non-unit eigenvalues. In Section \ref{Section: regularized saddle point matrices}, these normal equation approximations are utilized to construct positive definite block-diagonal preconditioners for the saddle point system in \eqref{General Saddle point system}, and the spectral properties of the resulting preconditioned matrices are also discussed. Additionally, an alternative saddle-point preconditioner based on a $LDL^\top$ decomposition is presented.
\par All of the preconditioning approaches discussed are compared numerically on saddle-point systems arising from the application of a regularized IPM for the solution of real-life linear and convex quadratic programming problems. 
In particular, we present some numerical results on certain test problems 
taken from the Netlib (see \cite{Netlib_collection}) and the Maros--Mészáros (see \cite{OMS:MarosMeszaros}) collections. 
Then, we test the preconditioners on examples of Partial Differential 
Equation (PDE) optimal control problems. All preconditioning approaches 
have been implemented within an Interior Point-Proximal Method of Multipliers 
(IP-PMM) framework, which is a polynomially convergent primal-dual regularized IPM, 
based on the developments in \cite{exIP-PMM:PougGond,IP-PMM_SDP:PougGond}. 
A robust implementation is provided.
\par It is worth stressing that the proposed preconditioners are {\it general} 
and do not assume the knowledge of special structures which might be 
present in the matrices $Q$ and $A$ (such as block-diagonal, block-angular, 
network, PDE-induced, etc.). Therefore they may be applied 
within general-purpose IPM solvers for linear and convex quadratic programming problems.

\paragraph{Notation.} Throughout this paper we use lowercase Roman and Greek letters to  indicate vectors and scalars. Capitalized Roman fonts are used to indicate matrices. Superscripts are used to denote the components of a vector/matrix. Sets of indices are denoted by caligraphic capital fonts. For example, given $M \in \mathbb{R}^{m\times n}$, $v \in \mathbb{R}^n$, $\mathcal{R} \subseteq \{1,\ldots,m\}$, and $\mathcal{C} \subseteq \{1,\ldots,n\}$, we set
$v^{\mathcal{C}} := (v^i)_{i\in\mathcal{C}}$ and $M^{(\mathcal{R},\mathcal{C})} := ( m^{(i,j)} )_{i\in\mathcal{R}, j\in\mathcal{C}}$,
where $v^i$ is the $i$-th entry of $v$ and $m^{(i,j)}$ the $(i,j)$-th entry of $M$. Additionally, the full set of indices is denoted by a colon. In particular, $M^{(:,\mathcal{C})}$ denotes all columns of $M$ with indices in $\mathcal{C}$.  Given a matrix $M$, we denote the diagonal matrix with the same diagonal elements as $M$ by $\textnormal{Diag}(M)$.
We use $\lambda_{\min}(B)$ ($\lambda_{\max}(B)$, respectively) to denote the minimum (maximum) eigenvalue
of an arbitrary square matrix $B$ with real eigenvalues. Similarly, $\sigma_{\min}(B)$ ($\sigma_{\max}(B)$, respectively) denotes
the minimum (maximum) singular value of an arbitrary rectangular matrix $B$. 
We use $0_{m,n}$ to denote a matrix of size $m\times n$ with entries equal to $0$.
Furthermore, we use $I_n$ to indicate the identity matrix of size $n$. For any finite set $\mathcal{A}$, we denote by $\vert \mathcal{A} \vert$
its cardinality. Finally, given two positive functions $T,\ f \colon \mathbb{R}_{+} \mapsto \mathbb{R}_{+}$, we write $T(x) = \bm{\Theta}(f(x))$ if these functions are of the same order of magnitude, that is, there exist constants $c_1,\ c_2 > 0$ and some $x_0 \geq 0$ such that $c_1 f(x) \leq T(x) \leq c_2 f(x)$, for all $x \geq x_0$.
\paragraph{Structure of the article.} The rest of this paper is organized as follows. In Section \ref{Section General Purpose Preconditioner} we present some preconditioners suitable for the normal equations. Then, in Section \ref{Section: regularized saddle point matrices}, we adapt these preconditioners to regularized saddle point systems. Subsequently, in Section \ref{Section Regularized IPM linear systems} we focus on saddle point systems arising from the application of regularized IPMs to convex programming problems, and present some numerical results. Finally, in Section \ref{Section Conclusions}, we deliver our conclusions.
\section{Regularized normal equations} \label{Section General Purpose Preconditioner}
\par We begin by defining the regularized normal equations matrix (or Schur complement) $M \coloneqq A G A^\top + \delta I_m \in \mathbb{R}^{m\times m}$, corresponding to \eqref{General Saddle point system}, where $G \coloneqq (Q+\rho I_n)^{-1}  \succ 0 \in \mathbb{R}^{n\times n}$. In this section, we derive and analyze preconditioning approaches for $M$. As we have already mentioned in the introduction, we achieve a simplification of the preconditioner by setting to zero certain columns of the matrix $A$ (and consequently sparsifying the corresponding rows and columns of $Q$), as well as by sparsifying certain rows of the matrix $M$.
\par More specifically, we define two integers, $k_c$ and $k_r$, such that $0 \leq k_c \leq n$ and $0\leq k_r \leq m$. The former counts the number of columns of $A$ (and corresponding columns and rows of $Q$) that are set to zero (that are sparsified, respectively), while the latter counts the number of rows of the matrix $M$ that are sparsified. At this point, we assume that we have been given these columns or rows, but we later specify how these can be chosen (see Remark \ref{Remark: how to choose columns or rows} and Section \ref{Section Regularized IPM linear systems}). In order to highlight these given columns and rows, we assume that we are given two permutation matrices; a column permutation $\mathscr{P}_c \in \mathbb{R}^{n\times n}$, and a row permutation $\mathscr{P}_r \in \mathbb{R}^{m\times m}$. Applied to the constraint matrix $A$ in \eqref{General Saddle point system}, these permutations 
bring all the columns and rows which will need to be treated 
specially to the leading positions of columns and rows 
of $ \mathscr{P}_r A \mathscr{P}_c$, respectively.
\par Given the previous permutation matrices, let us firstly define an approximation to the matrix $Q$. In particular, we approximate $Q$ by the following block-diagonal and positive semi-definite matrix:
\begin{equation} \label{eqn: approximation of Q}
\widehat{Q} \coloneqq \mathscr{P}_c\begin{bmatrix}
\widehat{Q}_1 & 0_{k_c,(n-k_c)}\\
0_{n-k_c,k_c} & \widehat{Q}_2
\end{bmatrix} \mathscr{P}_c^\top, \quad \text{assuming}\quad Q \equiv \mathscr{P}_c \begin{bmatrix}
Q_1 & Q_3^\top\\
Q_3 & Q_2
\end{bmatrix}\mathscr{P}_c^\top,
\end{equation}
\noindent where $\widehat{Q}_1 = Q_1$ or $\widehat{Q}_1 = \textnormal{Diag}(Q_1)$, and $\widehat{Q}_2 = Q_2$ or $\widehat{Q}_2 = \textnormal{Diag}(Q_2)$ (both cases are treated concurrently). The column permutation $\mathscr{P}_c$ reorders symmetrically both rows and columns of the matrix $Q$ in \eqref{General Saddle point system}, and places the $k_c$ columns and rows which will be sparsified at the leading $(1,1)$ block of the permuted version of the matrix $Q$. Given this approximation of $Q$, let us define an approximated normal equations' matrix that will be of interest when analyzing the spectral properties of the preconditioned matrices derived in this paper:
\begin{equation} \label{eqn: definition of approximated normal equations}
\widehat{M} \coloneqq A \widehat{G}A^\top + \delta I_m,\quad \widehat{G} \coloneqq \left(\widehat{Q}+\rho I_{n}\right)^{-1} \equiv \mathscr{P}_c\begin{bmatrix}
\left(\widehat{Q}_1+ \rho I_{k_c}\right)^{-1} & 0_{k_c,(n-k_c)}\\
0_{(n-k_c),k_c} & \left( \widehat{Q}_2 + \rho I_{n-k_c}\right)^{-1}
\end{bmatrix} \mathscr{P}_c^\top.
\end{equation}
\par In what follows, we derive a preconditioner for the approximated normal equations' matrix $\widehat{M}$. We should note that system \eqref{General Saddle point system} is solved using the normal equations only if $Q$ is diagonal (due to obvious numerical considerations), in which case $Q \equiv \widehat{Q}$, and thus $M \equiv \widehat{M}$. If this is not the case, we would like to derive a preconditioner for the approximate normal equations' matrix $\widehat{M}$. Later on, and in particular in Section \ref{Section: regularized saddle point matrices}, this is utilized to derive and analyze a preconditioner for the matrix $K$, defined in \eqref{General Saddle point system}.
\par We proceed by introducing some notation, for convenience of exposition. Given the definitions in \eqref{eqn: approximation of Q}, \eqref{eqn: definition of approximated normal equations}, we can write: 
\[B \coloneqq \mathscr{P}_r A \widehat{G}^{\frac{1}{2}}\mathscr{P}_c = \begin{bmatrix}
B_{11}& B_{12} \\
B_{21} & B_{22}
\end{bmatrix}, \]
\noindent where $\mathscr{P}_r$ is a given row-permutation matrix, $B_{11} \in \mathbb{R}^{k_r \times k_c}$, $B_{12} \in \mathbb{R}^{k_r \times (n-k_c)}$, $B_{21} \in \mathbb{R}^{(m-k_r)\times k_c}$, and $B_{22} \in \mathbb{R}^{(m-k_r)\times(n-k_c)}$. Notice that the aim of the row-permutation matrix $\mathscr{P}_r$, is to bring on top all rows of the matrix $\widehat{M}$, that we are planning to sparsify in order to construct the preconditioner. Let us further introduce the following notation:
\[\mathscr{P}_r \widehat{M} \mathscr{P}_r^\top \equiv \begin{bmatrix}
\widehat{M}_{11} & \widehat{M}_{21}^\top\\
\widehat{M}_{21} & \widehat{M}_{22}
\end{bmatrix}, \]
\noindent where $\widehat{M}_{11} $, $\widehat{M}_{21} $, and $\widehat{M}_{22} $ are defined as:
\begin{align*}
&\widehat{M}_{11} \coloneqq B_{11}B_{11}^\top + B_{12}B_{12}^\top + \delta I_{k_r}  \ \in \mathbb{R}^{k_r\times k_r},\\
&\widehat{M}_{21} \coloneqq B_{21}B_{11}^\top + B_{22}B_{12}^\top \ \in \mathbb{R}^{(m-k_r)\times k_r},\\
&\widehat{M}_{22} \coloneqq B_{21}B_{21}^\top + B_{22}B_{22}^\top + \delta I_{m-k_r} \ \in \mathbb{R}^{(m-k_r)\times(m-k_r)}.
\end{align*}
\par In what follows, we present two preconditioning strategies for the matrix $\widehat{M}$. Both approaches exploit the sparsification of the matrix $\widehat{M}$. The first approach relies on a Cholesky decomposition of a sparsified matrix, while the second approach is based on an $LDL^\top$ decomposition of a sparsified augmented system matrix, which is used to implicitly derive a preconditioner for $\widehat{M}$.
\subsection{A Cholesky-based preconditioner} \label{Subsection: Preconditioner for RNE}
\par Our first proposal is to consider preconditioning $\mathscr{P}_r \widehat{M} \mathscr{P}_r^\top$ with the following matrix:
\begin{equation} \label{Preconditioner for regularized normal equations}
P_{NE,(k_c,k_r)} \coloneqq \begin{bmatrix}
\widehat{M}_{11} & 0_{k_r, (m-k_r)} \\
0_{(m-k_r),k_r} & \widetilde{M}_{22}
\end{bmatrix}, \qquad \widetilde{M}_{22} \coloneqq \widehat{M}_{22} - B_{21}B_{21}^\top.
\end{equation}
\noindent The notation $P_{NE,(k_c,k_r)}$ signifies that this is a preconditioner for the normal equations, in which we drop $k_c$ columns from the matrix $A$ and sparsify $k_r$  rows of the matrix $\widehat{M}$. Notice that if $k_c = 0$ (that is, we only sparsify certain rows of $\widehat{M}$ to construct the preconditioner), we can write $\small{B \equiv \mathscr{P}_r A \widehat{G}^{\frac{1}{2}} = \begin{bmatrix}
B_{12} \\
B_{22}
\end{bmatrix}}$, while $B_{11}$, $B_{21}$ are zero-dimensional, and hence absent. In this case, we have
\[P_{NE,(0,k_r)} \coloneqq \begin{bmatrix}
\widehat{M}_{11} & 0_{k_r, (m-k_r)} \\
0_{(m-k_r),k_r} & \widehat{M}_{22}
\end{bmatrix}. \]
\noindent On the other hand, if $k_r = 0$ (that is, we only drop $k_c$ columns from $A$ to construct the preconditioner), we have $B \equiv \begin{bmatrix}
B_{21} & B_{22}
\end{bmatrix}$, and $B_{11},\ B_{12}$ are absent. Then, we obtain
\[P_{NE,(k_c,0)} = \widetilde{M}_{22}. \]
\noindent Notice that the latter is obtained since $\widehat{Q}$ is block-separable (with respect to the permutation $\mathscr{P}_c$), and thus dropping the respective $k_c$ columns of $A$, results in dropping $B_{21}B_{21}^\top$. For simplicity of notation, for the rest of this subsection we set $P_{NE,(k_c,k_r)} \equiv P_{NE}$.
\par In the following theorem, we analyze the spectrum of the preconditioned matrix $P_{NE}^{-1}\mathscr{P}_r \widehat{M} \mathscr{P}_r^\top$, with respect to the spectrum of the associated matrices. 
\begin{theorem} \label{Spectral Analysis theorem}
The preconditioned matrix $P_{NE}^{-1}\mathscr{P}_r \widehat{M} \mathscr{P}_r^\top$ has at least 
$\max\{m-(2k_r+k_c),0\}$ eigenvalues at $1$. If $k_c > 0$ and $k_r > 0$, all remaining eigenvalues lie in the following interval:
\[ I_{k_c,k_r} \coloneqq  \left[\frac{\delta}{\delta+\sigma_{\max}^2(B)}, 2 + \frac{\lambda_{\max}(B_{21}B_{21}^\top)}{\delta+\lambda_{\min}(B_{22}B_{22}^\top)}\right].\]
\noindent If $k_c > 0$ and $k_r = 0$, the previous interval reduces to 
\[I_{k_c} \coloneqq \left[1, 1 + \frac{\lambda_{\max}(B_{21}B_{21}^\top)}{\delta+\lambda_{\min}(B_{22}B_{22}^\top)}\right],\]
\noindent while if $k_r > 0$ and $k_c = 0$, we obtain
\[ I_{k_r} \coloneqq  \left[\frac{\delta}{\delta+\sigma_{\max}^2(B)}, 2 \right].\]
\end{theorem}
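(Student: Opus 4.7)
My plan is to split the argument into three pieces: counting the eigenvectors at $\lambda=1$, deriving the Rayleigh-quotient lower bound, and a more delicate upper-bound argument based on symmetrisation and a Schur-complement identity.

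For the unit-eigenvalue count, I would observe that since $P_{NE}\succ 0$, the $\lambda=1$ eigenspace coincides with the kernel of the difference matrix
\[
E \;:=\; \mathscr{P}_r \widehat M \mathscr{P}_r^\top - P_{NE} \;=\; \begin{bmatrix} 0 & \widehat M_{21}^\top \\ \widehat M_{21} & B_{21}B_{21}^\top \end{bmatrix}.
\]
Since $\widehat M_{21}$ has $k_r$ columns and $B_{21}$ has $k_c$ columns, a subadditive rank estimate gives $\mathrm{rank}(E)\leq 2\,\mathrm{rank}(\widehat M_{21})+\mathrm{rank}(B_{21})\leq 2k_r+k_c$, whence $\dim\ker E\geq m-(2k_r+k_c)$. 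The lower bound then follows from the variational identity $\lambda = x^\top\widehat M x/x^\top P_{NE}x$: the numerator satisfies $x^\top\widehat M x\geq\delta\|x\|^2$ because $A\widehat G A^\top\succeq 0$, while each diagonal block of $P_{NE}$ has the form $CC^\top+\delta I$ with $\sigma_{\max}(C)\leq \sigma_{\max}(B)$, giving $x^\top P_{NE}x\leq(\delta+\sigma_{\max}^2(B))\|x\|^2$.

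The upper bound is the heart of the argument. I would pass to the symmetrised residual $\tilde E := P_{NE}^{-1/2}E\,P_{NE}^{-1/2}$, whose eigenvalues are $\tau = \lambda-1$ and which has the block form
\[
\tilde E = \begin{bmatrix} 0 & R^\top \\ R & S \end{bmatrix},\qquad R:=\widetilde M_{22}^{-1/2}\widehat M_{21}\widehat M_{11}^{-1/2},\quad S:=\widetilde M_{22}^{-1/2}B_{21}B_{21}^\top\widetilde M_{22}^{-1/2}.
\]
For an eigenpair $(a,b)$ with $\tau\neq 0$ one necessarily has $b\neq 0$; eliminating $a=R^\top b/\tau$ from $Ra+Sb=\tau b$ and pairing with $b$ produces the scalar relation $\tau^2\|b\|^2 = \tau\,b^\top S b + \|R^\top b\|^2$. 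Bounding the two right-hand terms by Rayleigh quotients gives $\tau^2\leq s\tau+\sigma_{\max}^2(R)$ with $s:=\lambda_{\max}(S)$. The key ingredient is the Schur-complement inequality $\widehat M_{21}\widehat M_{11}^{-1}\widehat M_{21}^\top\prec \widehat M_{22}=\widetilde M_{22}+B_{21}B_{21}^\top$ (from positive definiteness of $\widehat M$), which upon conjugating by $\widetilde M_{22}^{-1/2}$ becomes $RR^\top\prec I+S$ and hence $\sigma_{\max}^2(R)\leq 1+s$. Substituting yields the quadratic $\tau^2-s\tau-(1+s)\leq 0$, whose discriminant $s^2+4(1+s)=(s+2)^2$ is a perfect square, so $\tau\leq s+1$ and $\lambda\leq 2+s$. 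Finally, a direct Rayleigh-quotient estimate shows $s\leq\lambda_{\max}(B_{21}B_{21}^\top)/(\delta+\lambda_{\min}(B_{22}B_{22}^\top))$.

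The two degenerate cases drop out immediately. When $k_c=0$, $B_{21}$ is absent so $S=0$ and the upper bound collapses to $2$. When $k_r=0$ the top block vanishes, so $P_{NE}^{-1}\widehat M = I+\widetilde M_{22}^{-1}B_{21}B_{21}^\top$, and $\mathrm{rank}(B_{21}B_{21}^\top)\leq k_c$ gives at least $m-k_c$ unit eigenvalues with the rest in $[1,1+s]$. I expect the main obstacle to be the tightness of the upper bound: a brute-force estimate of $x^\top(\widehat M-P_{NE})x$ by Cauchy--Schwarz leaves a spurious factor of $2$ in front of $\lambda_{\max}(B_{21}B_{21}^\top)$, yielding the inferior bound $2+2s$. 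Recovering the clean statement $2+s$ requires exactly the Schur-complement identity $\sigma_{\max}^2(R)\leq 1+s$ coupled with the perfect-square cancellation in the quadratic discriminant.
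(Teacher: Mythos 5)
Your proof is correct, and while it rests on the same skeleton as the paper's argument (block elimination of the first variable, a resulting scalar quadratic, and positive definiteness of $\widehat{M}$ via its Schur complement), you package it in a genuinely cleaner way at three points. First, you count the unit eigenvalues by bounding $\mathrm{rank}(E)$ for the difference matrix $E=\mathscr{P}_r\widehat{M}\mathscr{P}_r^\top-P_{NE}$ and identifying the $\lambda=1$ eigenspace with $\ker E$; the paper instead argues informally that ``dropping $\widehat{M}_{21}$ yields at most $2k_r$ outliers'' and ``dropping $B_{21}B_{21}^\top$ yields at most $k_c$ outliers,'' so your version is the more rigorous of the two. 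Second, by shifting to $\tau=\lambda-1$ and applying the uniform bounds $\lambda_{\max}(S)\le s$ and $\sigma_{\max}^2(R)\le 1+s$ \emph{before} solving the quadratic, you obtain a single inequality $\tau^2-s\tau-(1+s)\le 0$ whose discriminant is the perfect square $(s+2)^2$, so the roots are exactly $-1$ and $s+1$; the paper instead works with a quadratic $\lambda^2+\beta\lambda+\gamma=0$ whose coefficients depend on the eigenvector $v_2$, and must then argue monotonicity of $\lambda_\pm$ in $\gamma$ and use the auxiliary inequality $\gamma<-\beta-1$ to reach the same bounds $\lambda_-<1$ and $\lambda_+\le-\beta_l=2+s$. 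The key inequality $RR^\top\prec I+S$ in your argument is precisely the paper's observation that $\gamma>0$ (positivity of the Schur complement of $\widehat{M}_{11}$ in $\widehat{M}$), just deployed at the matrix rather than the scalar level. Third, your unified treatment recovers the $k_c=0$ case directly (with $S=0$ the bound collapses to $2$), whereas the paper defers that case to a citation of an external theorem. The one aspect to be explicit about when writing this up is the sign of $\tau$: the identity $\tau^2\|b\|^2=\tau\,b^\top Sb+\|R^\top b\|^2$ holds for all $\tau\neq 0$, but the step $\tau\,b^\top Sb\le\tau s\|b\|^2$ requires $\tau>0$; for $\tau\le 0$ the upper bound $\lambda\le 2+s$ is trivial, so nothing is lost, but the restriction should be stated.
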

\begin{proof}
\par Given an arbitrary eigenvalue $\lambda$ (which must be positive since $P_{NE} \succ 0$ and $\widehat{M} \succ 0$) corresponding to a unit eigenvector $v$, let us write the generalized eigenproblem as:
\begin{equation} \label{Spectral Analysis: Generalized Eigenproblem}
\begin{bmatrix}
\widehat{M}_{11} &\widehat{M}_{21}^\top\\
\widehat{M}_{21} & \widehat{M}_{22}
\end{bmatrix} \begin{bmatrix}
v_1\\
v_2
\end{bmatrix} = \lambda \begin{bmatrix}
\widehat{M}_{11}v_1\\
\widetilde{M}_{22} v_2
\end{bmatrix}.
\end{equation}
\noindent We separate the analysis into two cases.
\par \textbf{Case 1:} Let $v_2 \in \textnormal{Null}(\widehat{M}_{21}^\top)$. Firstly, we notice that:
\[ \textnormal{dim}\left(\textnormal{Null}\left(\widehat{M}_{21}^\top\right)\right) = (m-k_r) - \textnormal{rank}\left(\widehat{M}_{21}^\top\right) \geq \max\{m-2k_r,0\}. \]
\noindent Two sub-cases arise here. For the first sub-case, we notice that if $v_1 \neq 0$, then from positive definiteness of $\widehat{M}_{11}$, combined with the first block equation of \eqref{Spectral Analysis: Generalized Eigenproblem}, we obtain that $\lambda = 1$. In turn, we claim that this implies that $v_2 \in \textnormal{Null}(B_{21}B_{21}^\top)$ and $v_1 \in \textnormal{Null}(\widehat{M}_{21})$. To see this, assume that $v_2 \notin \textnormal{Null}(B_{21}B_{21}^\top)$. Then from the second block equation of \eqref{Spectral Analysis: Generalized Eigenproblem} we obtain:
\[\widehat{M}_{21}v_1 + \widehat{M}_{22} v_2 = \widetilde{M}_{22} v_2 ~~ \Rightarrow ~~ \widehat{M}_{21}v_1 = -B_{21}B_{21}^\top v_2, \]
\noindent where we used the definition of $\widetilde{M}_{22}$. If $v_2 \notin \textnormal{Null}(B_{21}B_{21}^\top)$, this implies that $v_2^\top B_{21}B_{21}^\top v_2  > 0$. The previous equation then yields that
\[v_2^\top \widehat{M}_{21} v_1 = - v_2^\top B_{21}B_{21}^\top v_2 ~~ \Rightarrow ~~ 0 = -v_2^\top B_{21}B_{21}^\top v_2 < 0,\]
\noindent which follows from the base assumption (i.e. $v_2 \in \textnormal{Null}(\widehat{M}_{21}^\top)$), and results in a contradiction. Hence, $v_2 \in \textnormal{Null}(B_{21}B_{21}^\top)$. On the other hand, if $v_1 \notin \textnormal{Null}(\widehat{M}_{21})$ then the second block equation yields directly a contradiction, since we have shown that $v_2 \in \textnormal{Null}(B_{21}B_{21}^\top)$.
\par Next we consider the second sub-case, i.e. $v_1 = 0$. Combined with our base assumption, the first block equation of \eqref{Spectral Analysis: Generalized Eigenproblem} becomes redundant. From the second block equation of the eigenproblem, and using $v_1 = 0$, we obtain:
\begin{equation*}% \label{Spectral Analysis: Case 1 eigenvalue relation}
\begin{split}
v_2^\top  \widehat{M}_{22} v_2 = &\ \lambda v_2^\top\widetilde{M}_{22} v_2 \\
\Rightarrow~~v_2^\top\left( \widetilde{M}_{22} +  B_{21}B_{21}^\top\right)v_2 = &\ \lambda v_2^\top \widetilde{M}_{22} v_2.
\end{split}
\end{equation*}
\noindent Hence we have that:
\[ \lambda  = 1 + \frac{v_2^\top (B_{21}B_{21}^\top)v_2}{v_2^\top \widetilde{M}_{22} v_2} \leq 1 + \frac{\lambda_{\max}(B_{21}B_{21}^\top)}{\delta + \lambda_{\min}(B_{22}B_{22}^\top)}. \]
\par All eigenvalues in this case can be bounded by the previous inequality and there will be at most $\textnormal{rank}(B_{21}B_{21}^\top)$ non-unit eigenvalues. On the other hand, if $v_2 \in \textnormal{Null}(B_{21}B_{21}^\top)$, then trivially $\lambda = 1$. This concludes the first case. Notice that this case would occur necessarily if $k_r = 0$, and thus, we obtain the interval $I_{k_c}$.
\par \textbf{Case 2:} In this case, we assume that $v_2 \notin \textnormal{Null}(\widehat{M}_{21}^\top)$. In what follows we assume $\lambda \neq 1$ (noting that $\lambda = 1$ would only occur if $v_1 \in \textnormal{Null}(\widehat{M}_{21})$ and $v_2 \in \textnormal{Null}(B_{21}B_{21}^\top)$), and there are at most $2k_r$ such eigenvalues. Given the previous assumption, and using the first block equation in \eqref{Spectral Analysis: Generalized Eigenproblem}, we obtain:
\[v_1 = \frac{1}{\lambda-1}\widehat{M}_{11}^{-1}\widehat{M}_{21}^\top v_2.\]
\noindent Substituting the previous into the second block equation of \eqref{Spectral Analysis: Generalized Eigenproblem}, yields the following generalized eigenproblem:
\begin{equation} \label{Spectral Analysis: Case 2 eigenproblem}
\begin{split}
\left(\widehat{M}_{21}\widehat{M}_{11}^{-1}\widehat{M}_{21}^\top  + (\lambda - 1)B_{21}B_{21}^\top \right) v_2 = (\lambda-1)^2 \widetilde{M}_{22}v_2,
\end{split}
\end{equation}
\noindent where we used the definitions of $\widetilde{M}_{22}$ and $\widehat{M}_{22}$. Premultiplying \eqref{Spectral Analysis: Case 2 eigenproblem} by $v_2^\top$ and rearranging yields the following quadratic algebraic equation that $\lambda$ must satisfy in this case:
\begin{equation} \label{Spectral Analysis: Case 2 eigenvalue quadratic relation}
\lambda^2 + \beta \lambda + \gamma = 0,
\end{equation}
\noindent where 
\[\beta \coloneqq -2 - \frac{v_2^\top B_{21}B_{21}^\top v_2}{v_2^\top \widetilde{M}_{22} v_2}\] 
\noindent and 
\[ \gamma \coloneqq 1 - \frac{v_2^\top \left(\widehat{M}_{21}\widehat{M}_{11}^{-1}\widehat{M}_{21}^\top - B_{21}B_{21}^\top \right)v_2}{v_2^\top \widetilde{M}_{22}v_2}.\]
\par Let us notice that the smallest eigenvalue is at least as large as $\frac{\delta}{\delta + \sigma_{\max}^2(B)}$. This follows from positive definiteness of $P_{NE}$ and $M$, and the bound can be deduced by noticing that $\lambda_{\min}(P_{NE}^{-1}\widehat{M}) \geq \frac{\lambda_{\min}(\widehat{M})}{\lambda_{\max}(P_{NE})} \geq \frac{\delta}{\delta + \sigma_{\max}^2(B)}$. Still we need to find an upper bound for the largest eigenvalue. To that end, notice that:
\[ \gamma = \frac{v_2^\top \left( \widehat{M}_{22} - \widehat{M}_{21}\widehat{M}_{11}^{-1}\widehat{M}_{21}^\top \right) v_2}{v_2^\top \widetilde{M}_{22} v_2},\]
\noindent which follows from the definition of $\widetilde{M}_{22}$. Positive definiteness of $\widehat{M}$ then implies that $\gamma > 0$. From the last relation we also have that:
\[0 < \gamma \leq 1 + \frac{v_2^\top B_{21}B_{21}^\top v_2}{v_2^\top \widetilde{M}_{22}v_2} = \frac{v_2^\top \widehat{M}_{22} v_2}{v_2^\top \widetilde{M}_{22} v_2} \leq 1+ \frac{\lambda_{\max}(B_{21}B_{21}^\top)}{\delta + \lambda_{\min}(B_{22}B_{22}^\top)} \eqqcolon \gamma_u. \]
\noindent Furthermore, $\beta_l \coloneqq -\left(2 + \frac{\lambda_{\max}\left(B_{21}B_{21}^\top\right)}{\delta + \lambda_{\min}\left(B_{22}B_{22}^\top\right)}\right) \leq \beta \leq -2.$ From the previous inequality, one can also observe that $\gamma < - \beta - 1$.
\par Returning to \eqref{Spectral Analysis: Case 2 eigenvalue quadratic relation}, we first consider the following solution:
\[ \lambda_- = \frac{1}{2}\left(-\beta -\sqrt{\beta^2-4\gamma}\right).\]
\noindent It is easy to see that $\beta^2 -4\gamma$ is always larger than $0$. Next, we notice that the relation for $\lambda_-$ is increasing with respect to $\gamma$. We omit finding a lower bound for $\lambda_-$ since this was established earlier. For the upper bound, we use the fact that $\gamma < -\beta - 1$, to obtain:
\[ \lambda_- < \frac{1}{2}\left(-\beta -\sqrt{\beta^2 + 4(\beta+1)}\right) = \frac{1}{2}\left(|\beta| - |\beta+2|\right) =  1,\]
\noindent since $\beta \leq -2$ (also, in the beginning of this case, we have treated $\lambda_- = 1$ separately). 
\par Finally, we consider the other solution of \eqref{Spectral Analysis: Case 2 eigenvalue quadratic relation}, which reads:
\[\lambda_{+} = \frac{1}{2}\left(-\beta + \sqrt{\beta^2 - 4\gamma}\right). \]
\noindent  Firstly, we can easily notice that $\lambda_{+} > 1$. Subsequently, upon noticing that $\lambda_{+}$ is decreasing with respect to $\gamma$, we can obtain the following obvious bound:
\[ \lambda_+ \leq |\beta| \leq -\beta_l.\]
\noindent Now, let us observe that dropping $\widehat{M}_{21}$ and $\widehat{M}_{21}^\top$ yields at most $k_r + \textnormal{rank}(\widehat{M}_{21}^\top) \leq 2k_r$ eigenvalue outliers. Similarly, dropping $B_{21}B_{21}^\top$ from the $(2,2)$ block of $M$ yields at most $\textnormal{rank}\left(B_{21}\right) \leq k_c$ eigenvalue outliers. Hence, there will be at least $\max\left\{m-(2k_r+k_c),0\right\}$ eigenvalues of the preconditioned matrix at $1$.
\par Finally, the case where $k_c = 0$ and $k_r > 0$ follows by a direct generalization of \cite[Theorem 4.1]{DeSimDiSerGondPougViol}, and completes the proof. \qed
\end{proof}

\begin{remark} \label{Remark: how to choose columns or rows}
Now that we have presented the spectral properties of the preconditioned system, let us discuss the use of such a preconditioning strategy. In practice, we solve system \eqref{General Saddle point system} using the normal equations, only when matrix $Q$ is diagonal. As already mentioned, in this case $\widehat{M} = M$, and thus $P_{NE}$ is a preconditioner for the normal equations' matrix. In Section \ref{Section: regularized saddle point matrices}, we discuss how $P_{NE}$ is utilized to construct preconditioners for the saddle point matrix in \eqref{General Saddle point system}, even if $Q$ is not diagonal (in which case $\widehat{M}$ is an approximation of the normal equations' matrix). 
\par Let us now discuss how to choose which columns of $A$ (or rows of $\widehat{M}$, respectively) to drop (sparsify, respectively).
\begin{itemize}
\item[$\bullet$] Firstly, it often happens in optimization, and especially when solving systems arising from the application of an interior point method (as already mentioned in the introduction), to have certain diagonal elements of $G$ that are very small. In view of this property, and given the bound presented in Theorem \textnormal{\ref{Spectral Analysis theorem}}, we can observe that dropping all columns corresponding to small diagonal elements in $G$ results in manageable and not too sizeable outliers. Such a preconditioner was proposed in \textnormal{\cite{NLAA:BergGondMartPearPoug}} for the case where $\widehat{Q} = \textnormal{Diag}(Q)$, and arises as a special case of $P_{NE}$ in \textnormal{\eqref{Preconditioner for regularized normal equations}}, by choosing $k_r = 0$ and a suitable permutation matrix $\mathscr{P}_c$, traversing first the $k_c$ indices corresponding to the smallest diagonal elements of $G$.
\item[$\bullet$] Secondly, it is common in many application areas to have a small number of columns or rows of $A$ that are dense. Such columns (or rows) could pose significant difficulties as they produce dense factors when one tries to factorize the normal equations (e.g. using a Cholesky decomposition). This is especially the case for dense columns.  A single dense column of $A$ with $p$ non-zero entries induces a dense window of size $p\times p$ in the normal equations (we refer the reader to the discussion in \textnormal{\cite[Section 4]{mybib:AGMX}}). The use of a preconditioner like the one defined in \textnormal{\eqref{Preconditioner for regularized normal equations}} serves the purpose of dropping (sparsifying, respectively) such columns of $A$ (rows of $\widehat{M}$, respectively), thus making the Cholesky factors of $P_{NE}$ significantly more sparse. For example, we may find two permutation matrices $\mathscr{P}_{r}$, $\mathscr{P}_{c}$ which sort the rows and columns, respectively, of $A$ in descending order of their number of non-zeros, and write $\widehat{A} = \mathscr{P}_{r} A \mathscr{P}_{c}$. Then, the resulting normal equations read as $\mathcal{P}_{r}^\top B B^\top \mathcal{P}_{r} + \delta I_m$. As long as the number of dropped columns or rows is low (which is observed in several applications), the number of outliers produced by this dropping strategy is manageable. While some of these outliers will be dangerously close to zero (given that the regularization parameter $\delta > 0$ is small), they can be dealt with efficiently. We should note, however, that if $\widehat{Q}$ is non-diagonal, without a sparse representation of its inverse, this strategy would be unattractive to employ, and thus in this work, we consider it only when $\widehat{Q}$ is diagonal. Indeed, as we discuss in Section \textnormal{\ref{Subsection: ldl-based preconditioner}}, in this case we never explicitly form the normal equations. Instead, we appropriately utilize a $LDL^\top$ decomposition, and the fill-in produced by few dense rows or columns of $A$ can be prevented.
\end{itemize}
\end{remark}
\begin{remark} 
As we discuss later, a case of interest would be to only drop certain  $k_c$ columns of $A$, which results in introducing at most $k_c$ eigenvalue outliers in the preconditioned matrix. Similarly, only sparsifying certain ($k_r$) rows of $\widehat{M}$, introduces at most $2k_r$ non-unit eigenvalues.  Notice that dropping columns is expected to be more useful in general, resulting in fewer outliers and possibly in greater gains (either in terms of processing time or memory requirements). Indeed, notice that, on the one hand, dropping dense columns of $A$ can result in a significant reduction of the fill-in within a factorization of the normal equations, while, on the other hand, dropping any column of $A$ corresponding to a small diagonal element of $G$ yields a not too sizeable outlier. However, in certain special applications one has to resort to sparsifying ``problematic'' rows. Indeed, we refer the reader to \textnormal{\cite[Section 4]{DeSimDiSerGondPougViol}}, where such a row-sparsifying strategy was key to the efficient solution of fMRI classification problems.
\end{remark}
\subsection{An $LDL^\top$-based preconditioner} \label{Subsection: ldl-based preconditioner}
\par Next, we present an alternative to the preconditioner in \eqref{Preconditioner for regularized normal equations}. This approach offers an alternative for dealing with a small set of dense columns or rows of the matrix $A$, while remaining efficient when the approximate Hessian $\widehat{Q}$, given in \eqref{eqn: approximation of Q}, is non-diagonal. More specifically, let us divide the columns of the matrix $A$ into two mutually exclusive sets $\mathcal{B}$ and $\mathcal{N}$, based solely on the magnitude of the respective diagonal elements of the matrix $G$, and not on the density of the columns of $A$. Then, using the column-dropping strategy presented in the previous section (assuming that the variables corresponding to $\mathcal{N}$ are such that $Q^{(j,j)} \geq Q^{(i,i)}$, for all $j \in \mathcal{N}$ and all $i \in \mathcal{B}$), we propose approximating the matrix $\widehat{M}$ by the following preconditioner:
\begin{equation} \label{Normal equations preconditioner without dropping dense row/col}
 {P}_{NE,\left(\lvert \mathcal{N}\rvert,0 \right)} = A^{(:,\mathcal{B})} \widehat{G}^{(\mathcal{B},\mathcal{B})}\left(A^{(:,\mathcal{B})}\right)^\top + \delta I_m,
 \end{equation}
\noindent which was proposed in \cite{NLAA:BergGondMartPearPoug}, for the special case where $\widehat{G}$ was diagonal. Notice that the block-separable structure of $\widehat{Q}$, given in \eqref{eqn: approximation of Q}, implies that $(\widehat{G}^{(\mathcal{B},\mathcal{B})})^{-1} = \widehat{Q}^{(\mathcal{B},\mathcal{B})} + \rho I_{|\mathcal{B}|}$. Given our previous discussion, we would like to avoid applying the inverse of this preconditioner by means of a Cholesky decomposition, as a single dense column of $A$ in $\mathcal{B}$ could result in dense Cholesky factors, while the potential non-diagonal nature of $\widehat{Q}^{\left(\mathcal{B},\mathcal{B}\right)}$ might prevent us from even efficiently forming it. Instead, we form an appropriate saddle point system to compute the action of the approximated normal equations. More specifically, given an arbitrary vector $y \in \mathbb{R}^m$, instead of computing ${P}_{NE,\left(\lvert \mathcal{N}\rvert,0 \right)}^{-1}y$ using a Cholesky decomposition, we can compute 
\begin{equation} \label{NE Preconditioner using LDL}
\underbrace{\begin{bmatrix}
-\left(Q^{(\mathcal{B},\mathcal{B})} + \rho I_{|\mathcal{B}|}\right) & \left(A^{(:,\mathcal{B})}\right)^\top\\
A^{(:,\mathcal{B})} & \delta I_m 
\end{bmatrix}}_{\widetilde{P}_{NE}} \begin{bmatrix}
w_1\\
w_2
\end{bmatrix} = \begin{bmatrix}
0_{|\mathcal{B}|}\\
y
\end{bmatrix},
\end{equation}
\noindent by means of an $LDL^\top$ decomposition of the previous saddle point matrix. Then, we notice that returning $w_2$ is equivalent to computing ${P}_{NE,\left(\lvert \mathcal{N}\rvert,0 \right)}^{-1}y$.
\par Following the discussion in \cite[Section 4]{mybib:AGMX}, we know that using an $LDL^\top$ structure to factorize the matrix in \eqref{NE Preconditioner using LDL} can result in significant memory savings compared to the Cholesky decomposition of ${P}_{NE,\left(\lvert \mathcal{N}\rvert,0 \right)}$. Notice that in view of the regularized nature of the systems under consideration (indeed, we have assumed that $G$ is positive definite), we can use the result in \cite{SIAMOptVander}, stating that matrices like the one in \eqref{NE Preconditioner using LDL} are \emph{quasi-definite}; any symmetric permutation of such matrices admits an $LDL^\top$ decomposition. 
\par While this approach might seem expensive, it can provide significant time and memory savings, especially in cases where $A^{(:,\mathcal{B})}$ contains dense columns. In the previous section we discussed a strategy for alleviating this issue, noting however that such a strategy can only be used to deal with a small number of dense columns. On the contrary, if we have a sizeable subset of the columns of $A^{(:,\mathcal{B})}$ that are dense, we could delay their pivot order within the $LDL^\top$, thus significantly reducing the overall fill-in of the decomposition factors, without introducing any eigenvalue outliers in the preconditioned system. Of course, finding the optimal permutation for the $LDL^\top$ decomposition is an NP-hard problem, however, there have been developed several permutation heuristics tailored to such symmetric decompositions. Moreover, in the $LDL^\top$ factorization, the pivots are computed dynamically to ensure both stability and sparsity. In view of the previous, the preconditioner based on solving \eqref{NE Preconditioner using LDL} is expected to be more stable than its counterpart based on the Cholesky decomposition. Finally, difficulties arising from dense rows or in general ``problematic" rows can also be alleviated using a heuristic proposed in \cite{EJOR:MarosMeszaros}.
\par On the other hand, by using this approach we avoid explicitly forming the preconditioner $P_{NE,\left(\lvert \mathcal{N}\rvert,0\right)}$. This is especially important in cases where $\widehat{Q}$ is non-diagonal, and thus forming $P_{NE}$ can be extremely expensive. Hence, the approach presented in this subsection allows us to utilize non-diagonal information in a practical way when constructing an approximation for the matrix $Q$.
\section{Regularized saddle point matrices} \label{Section: regularized saddle point matrices}
\par Let us now consider the regularized saddle point system in \eqref{General Saddle point system}. In what follows, we discuss two families of preconditioning strategies, noting their advantages and disadvantages. All presented preconditioners are positive definite in order to be usable within the MINRES method, which is a short-recurrence iterative solver, suitable for solving symmetric indefinite or quasi-definite systems. This allows us to avoid non-symmetric long-recurrence solvers like the GMRES method.
\subsection{Block-diagonal preconditioners} \label{Subsection: block diagonal preconditioners}
\par The most common approach is to employ a block-diagonal preconditioner (see \cite{BenGolLieActaNum,NLAA:BergGondMartPearPoug,NotaySIAM,SilvWathenSIAMNumAnal}).
To construct such a preconditioner, we need approximations for the $(1,1)$ block $F \coloneqq Q + \rho I_n$ of the coefficient matrix in \eqref{General Saddle point system}, and for its associated Schur complement $M = A \left(Q +\rho I_n\right)^{-1}A^\top + \delta I_m$. 
\par In this section, we assume that $Q$ is approximated as shown in \eqref{eqn: approximation of Q}, and thus can potentially contain non-diagonal blocks. Concerning the approximation of the Schur complement matrix $M$, we can employ the preconditioner $P_{NE,\left(k_c,k_r\right)}$ given in \eqref{Preconditioner for regularized normal equations}. Then, we can define the following preconditioner for the coefficient matrix $K$ in \eqref{General Saddle point system}:
\begin{equation} \label{Block diagonal saddle point preconditioner}
P_{AS,(k_c,k_r)} \coloneqq \begin{bmatrix}
\widehat{Q} + \rho I_n & 0_{n,m}\\
0_{m,n} & P_{NE,\left(k_c,k_r\right)}
\end{bmatrix} \equiv \begin{bmatrix}
\widehat{F}  & 0_{n,m}\\
0_{m,n} & P_{NE,\left(k_c,k_r\right)}
\end{bmatrix}.
\end{equation}
\noindent For the rest of this subsection, let $P_{AS,(k_c,k_r)} \equiv P_{AS}$ and $P_{NE,\left(k_c,k_r\right)} \equiv P_{NE}$.
\par In order to analyze the spectrum of the preconditioned matrix $P_{AS}^{-1} K$, we introduce some notation for simplicity of exposition. We work with positive definite similarity transformations of the associated matrices, defined as
			\begin{equation}
				\label{simsym}
				\widetilde{F} =  \widehat{F}^{-1/2} F  \widehat{F}^{-1/2}, \quad 
				\widetilde{M}_{NE} = P_{NE}^{-1/2} \widehat{M} P_{NE}^{-1/2}, 
\end{equation}
\noindent where $\widehat{M} \coloneqq A \widehat{F}^{-1} A^\top + \delta I_m$. Then, we set
		\[
		\begin{array}{lcllcllcl}
			\alpha_{NE} &=& \lambda_{\min} \left( \widetilde{M}_{NE}\right),~   & ~\beta_{NE} &=& \lambda_{\max} \left( \widetilde{M}_{NE}\right),~
& ~\kappa_{NE} &=& \dfrac{\beta_{NE}}{\alpha_{NE}}, 
			\\[.6em]
\alpha_{F} &=& \lambda_{\min} \left(\widetilde{F}\right),~ &   ~\beta_{F} &=& \lambda_{\max} \left(\widetilde{F}\right),~
& ~\kappa_{F} &=& \dfrac{\beta_{F}}{\alpha_{F}}. 
\end{array} \]
From the definition of $\widehat{Q}$ given in \eqref{eqn: approximation of Q}, we can observe that $\alpha_{F} \le 1 \le \beta_{F}$ as
		\[ \frac 1n \sum_{i=1}^n \lambda_i \left(\widehat{F}^{-1} F\right) = \frac 1n \textnormal{Tr}\left(\widehat{F}^{-1} F\right)
 = 1.\]
On the other hand, notice that $\alpha_{NE}$ and $\beta_{NE}$ are bounded directly from Theorem \ref{Spectral Analysis theorem}. Indeed, from \eqref{simsym}, we observe that we need to bound the spectrum of an approximate preconditioned Schur complement matrix, since $M$ has been substituted by $\widehat{M}$. This is exactly what the analysis in Section \ref{Subsection: Preconditioner for RNE} does. Below we provide a theorem analyzing the spectral properties of the matrix $P_{AS}^{-1}K$. 
		\begin{theorem} \label{Theorem Spectral analysis augmented system}
		The eigenvalues of $P_{AS}^{-1} K$ lie in the union of the following intervals:
			\[ I_- = \left[- \beta_{F} -\sqrt{\beta_{NE}}, -\alpha_{F}\right]; \quad I_+ = \left[\frac{1}{2}\left( -\beta_{F} + \sqrt{\beta_{F}^2 + 4\alpha_{NE}}\right) , 1 + \sqrt{\beta_{NE}-1}\right].
		%	\frac{1}{2} \left(1 - \alpha_{F} + \sqrt{(\alpha_{F} + 1)^2 + 4\beta_{NE}}\right)\right].
			\]
	\end{theorem}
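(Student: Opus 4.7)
The plan is to transform $Kv=\lambda P_{AS}v$ into a standard eigenvalue problem by the positive-definite congruence $u_1=\widehat F^{1/2}v_1$, $u_2=P_{NE}^{1/2}v_2$, so that the eigenvalues of $P_{AS}^{-1}K$ coincide with those of
\[
\widetilde K \;=\; \begin{bmatrix} -\widetilde F & \widetilde A^\top \\ \widetilde A & \delta P_{NE}^{-1} \end{bmatrix}, \qquad \widetilde A \;:=\; P_{NE}^{-1/2}A\widehat F^{-1/2}.
\]
A direct computation gives $\delta P_{NE}^{-1} = \widetilde M_{NE} - \widetilde A\widetilde A^\top$, so only the spectral boxes $[\alpha_F,\beta_F]$ for $\widetilde F$ and $[\alpha_{NE},\beta_{NE}]$ for $\widetilde M_{NE}$ enter the subsequent bounds; the latter is itself supplied by Theorem \ref{Spectral Analysis theorem}.

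I then split the eigenproblem into two branches. For the degenerate branch $u_2=0$, the second block forces $\widetilde A u_1=0$ and the first reduces to $-\widetilde F u_1=\lambda u_1$, giving $\lambda\in[-\beta_F,-\alpha_F]\subset I_-$. For the generic branch $u_2\ne 0$ I normalise $\|u_2\|=1$, assume $\lambda\notin\sigma(-\widetilde F)$, and solve the first block for $u_1=(\lambda I+\widetilde F)^{-1}\widetilde A^\top u_2$. Substituting into the second block and pairing with $u_2$ collapses the system to the scalar identity
\[
u_2^\top \widetilde A(\lambda I+\widetilde F)^{-1}\widetilde A^\top u_2 \;=\; \lambda - t, \qquad t := \delta u_2^\top P_{NE}^{-1}u_2,
\]
together with the linking relation $\|\widetilde A^\top u_2\|^2 + t = u_2^\top\widetilde M_{NE}u_2\in[\alpha_{NE},\beta_{NE}]$.

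For positive $\lambda$ the operator $\lambda I+\widetilde F$ is positive definite and a Rayleigh-quotient sandwich controls the left-hand side of the identity between $p/(\lambda+\beta_F)$ and $p/(\lambda+\alpha_F)$, where $p=\|\widetilde A^\top u_2\|^2$. Feeding in the spectral box for $\widetilde M_{NE}$ together with the trivial chains $t\ge 0$ and $\beta_F\ge\alpha_F$, one distills the two quadratic conditions $\lambda(\lambda+\beta_F)\ge\alpha_{NE}$ and $(\lambda-1)^2\le\beta_{NE}-1$, whose positive roots recover the endpoints of $I_+$. The negative regime $\lambda<-\beta_F$ is treated symmetrically: with $\mu:=-\lambda>\beta_F$ the trivial chain $\mu-\beta_F\le t+\mu$ combined with the spectral bound $(\mu-\beta_F)(t+\mu)\le\beta_{NE}$ yields $(\mu-\beta_F)^2\le\beta_{NE}$, i.e.\ $\lambda\ge-\beta_F-\sqrt{\beta_{NE}}$. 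Together with the degenerate branch this exhausts $I_-$.

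The main obstacle is the careful algebraic bookkeeping: in each regime the sign of $\lambda I+\widetilde F$, the direction of every Rayleigh inequality, and the coupling between $p$ and $t$ through $p+t=s$ must be tracked so that the \emph{precise} quadratic emerges rather than a looser by-product such as $\lambda\le\beta_{NE}$. A secondary subtlety is ruling out eigenvalues in the gap $\bigl(-\alpha_F,\tfrac{1}{2}(-\beta_F+\sqrt{\beta_F^2+4\alpha_{NE}})\bigr)$, where $\lambda I+\widetilde F$ may be indefinite: here the scalar identity must be decomposed over the eigenspaces of $\widetilde F$ and one verifies that no consistent solution with $s\in[\alpha_{NE},\beta_{NE}]$ occurs.
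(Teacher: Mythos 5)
Your proposal follows essentially the same route as the paper's proof: the congruence by $\widehat{F}^{1/2}$ and $P_{NE}^{1/2}$, the identity $\widetilde{A}\widetilde{A}^\top=\widetilde{M}_{NE}-\delta P_{NE}^{-1}$, elimination of $u_1$ when $\lambda I_n+\widetilde{F}$ is definite, and a Rayleigh-quotient reduction to a scalar quadratic; the paper's quadratic $(\lambda-\omega)(\lambda+\gamma_F)=\gamma_{NE}-\omega$ with $\omega=\delta/\gamma_p$ is exactly your identity with $t=\omega$ and $p+t=\gamma_{NE}$. Your handling of $I_-$ (the degenerate branch plus $(\mu-\beta_F)^2\le(\mu-\beta_F)(\mu+t)\le\beta_{NE}$) is correct and complete.

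The gap is in the claim that the two quadratic conditions for $I_+$ are ``distilled'' from the spectral box together with $t\ge 0$ and $\beta_F\ge\alpha_F$. First, $\lambda(\lambda+\beta_F)\ge\alpha_{NE}$ follows from $\lambda(\lambda+\beta_F)\ge t(\lambda+\beta_F)+p$ only if $\lambda+\beta_F\ge 1$, which requires $\beta_F\ge 1$; this is supplied by the trace identity $\frac{1}{n}\textnormal{Tr}(\widehat{F}^{-1}F)=1$ noted in Section 3.1, not by $\beta_F\ge\alpha_F$. Second, $(\lambda-1)^2\le\beta_{NE}-1$ is unobtainable from $t\ge 0$ alone: you need $t=\delta u_2^\top P_{NE}^{-1}u_2\le 1$, which holds because $P_{NE}\succeq\delta I_m$ by construction, and even then a factor-by-factor comparison only yields $(\lambda-1)^2\le\beta_{NE}-t$, i.e.\ $\lambda\le 1+\sqrt{\beta_{NE}}$; to recover $\beta_{NE}-1$ you must check the exact inequality $(\lambda-t)(\lambda+\alpha_F)+t-1-(\lambda-1)^2=(\alpha_F-t+2)\lambda-(t\alpha_F-t+2)\ge 0$ for $\lambda\ge 1$, which again uses $t\le 1$ and $\alpha_F>0$. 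Finally, the ``secondary subtlety'' you raise is spurious: for every $\lambda>-\alpha_F$ the matrix $\lambda I_n+\widetilde{F}$ is positive definite, so the interval $(-\alpha_F,0]$ is excluded by the sign contradiction $0\le z^\top(\lambda I_n+\widetilde{F})^{-1}z=\lambda-t<0$, and $\left(0,\tfrac{1}{2}\left(-\beta_F+\sqrt{\beta_F^2+4\alpha_{NE}}\right)\right)$ is excluded by your first quadratic condition; no decomposition over eigenspaces of $\widetilde{F}$ is required.
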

\begin{proof}
The proof, which follows by trivially extending \cite[Theorem 3]{NLAA:BergGondMartPearPoug}, is summarized in Appendix \ref{Appendix: proof of spectral analysis for augmented system} for completeness.
\end{proof}
\par The authors of \cite{NLAA:BergGondMartPearPoug} make use of the approximation $\widehat{Q} = \textnormal{Diag}(Q)$, and ${P}_{NE} = {P}_{NE,\left(\lvert \mathcal{N}\rvert,0\right)}$ (where the latter is defined in \eqref{Normal equations preconditioner without dropping dense row/col}, with $\widehat{G} = (\widehat{Q} + \rho I_n)^{-1}$). Approximating $Q$ by its diagonal allows one to form the normal equations' preconditioner (i.e. ${P}_{NE,\left(\lvert \mathcal{N}\rvert,0\right)}$), thus enabling the efficient use of a Cholesky factorization. However, there might exist problems for which a better approximation of the matrix $Q$ is required. In this case, one could consider a sparsified version of $Q$ like the one given in \eqref{eqn: approximation of Q}. While this could lead to reasonable approximations, the problem of fill-in introduced by $(\widehat{Q} + \rho I_n)^{-1}$ in the Schur complement approximation, ${P}_{NE,\left(\lvert \mathcal{N}\rvert,0\right)}$, would (in general) remain.
\par In order to address the previous issue, we make use of the $LDL^\top$-based preconditioner defined in Section \ref{Subsection: ldl-based preconditioner}. As in Section \ref{Subsection: ldl-based preconditioner}, we divide the columns of $A$ using the sets $\mathcal{B}$ and $\mathcal{N}$, where $\mathcal{N}$ contains all indices corresponding to the largest diagonal elements of $Q$. Assume that $Q$ is approximated by $\widehat{Q}$ as given in \eqref{eqn: approximation of Q}, where the permutation matrix $\mathscr{P}_c$ traverses first the column indices in $\mathcal{N}$. For example, a reasonable approximation would be the following
\begin{equation} \label{approximation for matrix Q in non-separable case}
\mathscr{P}_c^\top \widehat{Q} \mathscr{P}_c = \begin{bmatrix}
\textnormal{Diag}\left(Q^{(\mathcal{N},\mathcal{N})}\right) & 0_{\lvert \mathcal{N}\rvert, \lvert \mathcal{B}\rvert}\\
0_{\lvert \mathcal{B}\rvert, \lvert \mathcal{N}\rvert} & Q^{(\mathcal{B},\mathcal{B})}
\end{bmatrix}. 
\end{equation}
\noindent The effect of this approximation in the context of regularized IPMs has been analyzed in detail in \cite{JOTA:PougGond}. We notice that $(Q^{(\mathcal{B},\mathcal{B})} + \rho I_{|\mathcal{B}|})^{-1}$ does not introduce significant fill-in in the $(2,2)$ block of the preconditioner in \eqref{Block diagonal saddle point preconditioner}, as we implicitly invert this block using the methodology presented in Section \ref{Subsection: ldl-based preconditioner}. 
\begin{remark}
Notice that further approximations can be employed here. In particular, we could define a banded approximation of $Q$ and then employ the approximation proposed earlier. The implicit inversion of the Schur complement, outlined in Section \textnormal{\ref{Subsection: ldl-based preconditioner}}, gives us complete freedom on how to approximate $Q$, and hence we no longer rely on diagonal approximations. We return to this point in the numerical experiments.
\end{remark}

\subsection{Factorization-based preconditioners} \label{Subsection: factorization-based preconditioners for saddle point systems}

\par Finally, given the regularized nature of the systems under consideration, we can construct fac\-to\-ri\-za\-tion-based preconditioners for MINRES. In particular, we can compute $K = L D L^\top$ (with $K$ in \eqref{General Saddle point system}), where $D$ is a diagonal matrix (since $K$ is quasi-definite \cite{SIAMOptVander}) with $n$ negative and $m$ positive elements on its diagonal. Then, by defining $P_K \coloneqq L|D|^{\frac{1}{2}}$, the preconditioned saddle point matrix reads:
\[P_K^{-1} K P_K^{-\top} = |D|^{-1}D,\]
\noindent and hence contains only two distinct eigenvalues: $-1$ and $1$ \cite{GillMurrayPonceleSIMAX,OrbanNumAlg}. As before, let us assume that we have available a splitting of the columns of $A$ such that $ A\mathscr{P}_{c} = [A^{\mathcal{B}}\ \ A^{\mathcal{N}}]$, where $\mathcal{B}$ contains indices corresponding to the smallest diagonal elements of $Q$. Then, we can precondition $K$, left and right, by $\widehat{P}_K \coloneqq \widehat{L} |\widehat{D}|^{\frac{1}{2}}$, where $\widehat{K} = \widehat{L} \widehat{D}\widehat{L}^\top$ and:
\begin{equation*}% \label{Saddle point preconditioner-basis}
\widehat{K} \coloneqq \begin{bmatrix}
-\widehat{Q} & \widehat{A}^\top\\
\widehat{A} & \delta I_m,
\end{bmatrix},
\end{equation*}
\noindent with $\widehat{A} \coloneqq [A^{\mathcal{B}}\ \ 0_{m,|\mathcal{N}|}]\mathscr{P}_{c}^\top$, and $\widehat{Q}$ defined as in \eqref{approximation for matrix Q in non-separable case}. Notice that by setting several columns of $A$ to zero, as well as by sparsifying the respective rows and columns of $Q$, the cost of applying the inverse of $\widehat{K}$ is significantly reduced when compared to that required to apply the inverse of $K$.
\par Further limited-memory versions of this preconditioner can be employed, e.g. by using the methodologies presented in \cite{OrbanNumAlg,ScottTumaNumAlg}. Other approximations of the blocks of $\widehat{K}$, based on the structure of the problem at hand, could also be possible, as already mentioned in the previous subsection.
\par We should note, however, that this approach is less stable than the approach presented in Section \ref{Subsection: block diagonal preconditioners}. This is because we are required to use only diagonal pivots during the $LDL^\top$ decomposition for this methodology to work (indeed, notice that the presence of a non-diagonal matrix $D$ in the factorization of $K$ would not allow the use of such a preconditioning strategy). If the regularization parameters $\delta$ or $\rho$ have very small values, the stability of the factorization could be compromised (since we enforce the use of only diagonal $1\times 1$ pivots), and we would have to heavily rely on stability introduced by means of uniform \cite{REG:SaundersTomlin:TechReport} or weighted regularization \cite{AltGondzio:OMS}. On the other hand, the methodology presented in Section \ref{Subsection: block diagonal preconditioners} would not be affected by the occasional use of $2\times 2$ pivots within the $LDL^\top$ factorization for the implicit inversion of the approximate Schur complement. Of course the latter is not the case if the ``Analyze" phase of the factorization (used to determine the pivot order) is performed separately, however, the subset of columns in $\mathcal{B}$ may change significantly from one iteration to the next, making this strategy less attractive. Nevertheless, this factorization-based approach can be more efficient than the approach presented in Section \ref{Subsection: block diagonal preconditioners}, when solving certain non-separable convex programming problems. This is because the approach in Section \ref{Subsection: block diagonal preconditioners} requires the computation of an $LDL^\top$ decomposition of the coefficient matrix $\widetilde{P}_{NE}$ in \eqref{NE Preconditioner using LDL} (with potential $2\times2$ pivots) as well as a Cholesky decomposition of $\widehat{Q} + \rho I_n$ (or some iterative scheme which could be application-dependent, as in \cite{PearsonPorcelliStollNLAA}).

\section{Regularized IPMs: numerical results} \label{Section Regularized IPM linear systems}
\par Let us now focus on the case of the regularized saddle point systems (and their respective normal equations) arising from the application of regularized IPMs on convex programming problems. The MATLAB code, which is based on the IP-PMM presented in \cite{exIP-PMM:PougGond,IP-PMM_SDP:PougGond}, can be found on GitHub\footnote{\url{https://github.com/spougkakiotis/IP-PMM_QP_Solver}}.
\par In all the presented experiments, a 6-digit accurate solution is requested. The reader is referred to \cite[Section 4]{NLAA:BergGondMartPearPoug}, and \cite[Section 5]{exIP-PMM:PougGond} for the implementation details of the algorithm (such as termination criteria, the employed predictor--corrector scheme for the solution of the Newton system, as well as the tuning of the algorithmic regularization parameters). The associated iterative methods (i.e. PCG or MINRES) are adaptively terminated if the following accuracy is reached: $\frac{\min\{10^{-3},\max\{10^{-1}\cdot\mu_k,\texttt{tol}\}\}}{\max\{1,\|\texttt{rhs}\|\}}$, where $\texttt{tol} = 10^{-6}$, $\mu_k$ is the barrier parameter at iteration $k$, and $\texttt{rhs}$ is the right hand side of the system being solved. This adaptive stopping criterion is based on the developments in \cite{Cafieri_et_al_COAP}. When PCG is employed we allow at most 100 iterations per linear system solved, while for MINRES up to 200 iterations are allowed. If the maximum number of Krylov iterations is reached, an inexact Newton direction is accepted if it is at least 3-digit accurate. Any Cholesky decomposition is computed via the \texttt{chol} function of MATLAB. When an $LDL^\top$ decomposition is employed, we utilize the \texttt{ldl} function of MATLAB. In this case, the minimum pivot threshold is adaptively set to $\text{pivot}_{\text{thr}} = 0.1\cdot\min\{\delta,\rho,10^{-4}\}$. This is done to ensure that no $2\times 2$ pivots are used during the factorization, ensuring that the factorization remains efficient. However, in the context of the preconditioner in Section \ref{Subsection: ldl-based preconditioner}, where $2\times 2$ pivots can safely be used (unlike the preconditioner presented in Section \ref{Subsection: factorization-based preconditioners for saddle point systems}, which requires the use of $1\times 1$ pivots), this mechanism is turned off when $\min\{\delta,\rho\} \leq 10^{-8}$, and we set $\text{pivot}_{\text{thr}} = 10^{-6}$ to ensure stability. All the presented experiments were run on a PC with a 2.2GHz Intel Core i7 processor (hexa-core), 16GB RAM, run under the Windows 10 operating system. The MATLAB version used was 2019a. 
\subsection{Linear programming} \label{Subsection: LP IPM numerics}
\par Let us initially focus on  Linear Programming (LP) problems of the following form:
\begin{equation} \label{non-regularized primal LP} \tag{LP}
\underset{x \in \mathbb{R}^n}{\text{min}} \  c^\top x , \ \ \text{s.t.}  \  Ax = b,   \ x^{\mathcal{I}} \geq 0,\ x^{\mathcal{F}}\ \textnormal{free}, 
\end{equation}
\noindent where $A \in \mathbb{R}^{m\times n}$, $\mathcal{I} \cap \mathcal{F} = \emptyset$, and $\mathcal{I}\cup \mathcal{F} = \{1,\ldots,n\}$. Applying regularized IPMs to problems like \eqref{non-regularized primal LP}, one often solves a regularized normal equations system at every iteration. Such systems have a coefficient matrix of the following form: 
\begin{equation*}% \label{LP Regularized NE}
M = A G A^\top + \delta I_m,\qquad G^{(i,i)} = \begin{cases}
												\frac{1}{\rho} &\ \textnormal{if}\ i \in\mathcal{F}, \\
												\frac{1}{\rho + z^i/x^i} &\ \textnormal{if}\ i \in \mathcal{I},
												\end{cases}
\end{equation*}
\noindent where $\delta,\ \rho > 0$ and $z \in \mathbb{R}^n$ (where $z^{\mathcal{I}} \geq 0$, $z^{\mathcal{F}} = 0$) are the dual slack variables. Notice that the IPM penalty parameter $\mu$ is often tuned as $\mu = \frac{(x^{\mathcal{I}})^\top z^{\mathcal{I}}}{n}$ and we expect that $\mu \rightarrow 0$. As already mentioned in the introduction, the variables are naturally split into ``basic"--$\mathcal{B}$, ``non-basic"--$\mathcal{N}$, and ``undecided"--$\mathcal{U}$. Hence, as IPMs progress towards optimality, we expect the following partition of the quotient $\frac{x^{\mathcal{I}}}{z^{\mathcal{I}}}$: 
\begin{align*} 
&\forall j \in \mathcal{N}: x^j \rightarrow 0,\quad z^j \rightarrow \widehat{z}^j> 0 & \Rightarrow \quad &  \frac{x^j}{z^j } = \frac{x^j z^j}{(z^j)^2} = \bm{\Theta}(\mu),\\
&\forall j \in \mathcal{B}: x^j \rightarrow \widehat{x}^j > 0,\quad z^j \rightarrow 0  & \Rightarrow \quad &  \frac{x^j}{z^j } = \frac{(x^j)^2 }{x^j z^j} = \bm{\Theta}(\mu^{-1}),\\
&\forall j \in \mathcal{U}: x^j = \bm{\Theta}(1), \quad z^j = \bm{\Theta}(1) & \Rightarrow \quad & \frac{x^j}{z^j} = \bm{\Theta}(1),
\end{align*} 
\noindent where $\mathcal{N}$, $\mathcal{B}$, and $\mathcal{U}$ are mutually disjoint, and $\mathcal{N}\cup\mathcal{B}\cup \mathcal{U} = \mathcal{I}$. For the rest of this section, we assume that $\delta = \bm{\Theta}(\rho) = \bm{\Theta}(\mu)$. This assumption is based on the developments in \cite{exIP-PMM:PougGond,IP-PMM_SDP:PougGond}, where a polynomially convergent regularized IPM is derived for convex quadratic and  linear positive semi-definite programming problems, respectively. Following \cite{NLAA:BergGondMartPearPoug}, we could precondition the matrix $M$ using the following matrix:
\begin{equation} \label{LP NE dropping preconditioner}
P_{NE,\left(\lvert \mathcal{N}\rvert,0\right)} = A^{\left(\colon,\mathcal{R}\right)}G^{\left(\mathcal{R},\mathcal{R}\right)}\left(A^{\left(\colon,\mathcal{R}\right)}\right)^\top + \delta I_m,
\end{equation}
\noindent where $\mathcal{R} \coloneqq \mathcal{F}\cup\mathcal{B}\cup\mathcal{U}$. Then, by \cite[Theorem 1]{NLAA:BergGondMartPearPoug} (or by applying Theorem \ref{Spectral Analysis theorem}), we obtain:  
\[\lambda_{\max}\left(P_{NE,\left(\lvert \mathcal{N}\rvert,0\right)}^{-1}M\right) \leq 1 + \frac{\underset{j \in \mathcal{N}}{\max} \left(G^{(j,j)}\right)}{\delta}\sigma_{\max}^2(A),\qquad \lambda_{\min}\left(P_{NE,\left(\lvert \mathcal{N}\rvert,0\right)}^{-1}M\right) \geq 1. \]
\noindent The preconditioner in \eqref{LP NE dropping preconditioner} is a special case of the preconditioner defined in Section \ref{Section General Purpose Preconditioner}. Indeed, as already indicated by its notation, it can be derived by setting $k_r = 0$ and then by dropping all columns belonging to $\mathcal{N}$, i.e. we set $k_c = |\mathcal{N}|$ and we drop the $k_c$ columns of $A$ corresponding to the smallest diagonal elements of $G$. Notice that in the linear programming case, the matrix $\widehat{M}$ that is analyzed in Section \ref{Section General Purpose Preconditioner} coincides with $M$, since $G$ is diagonal.
\par From our previous remarks, we notice that 
\[\underset{j \in \mathcal{N}}{\max} \left(G^{(j,j)}\right)=\bm{\Theta}(\mu) = \bm{\Theta}(\delta)\]
\noindent implies that the spectrum of the preconditioned matrix remains bounded and is asymptotically independent of $\mu$ (assuming that $\delta = \bm{\Theta}(\mu)$). While this preconditioner performs very well in practice (see \cite[Section 4]{NLAA:BergGondMartPearPoug}), it can be expensive to compute in certain cases, as its inverse needs to be applied by means of a Cholesky decomposition. To that end, we propose to further approximate this matrix as indicated in Section \ref{Section General Purpose Preconditioner}. This idea is based on the fact that the Preconditioned Conjugate Gradient (PCG) method is expected to converge in a small number of iterations, if the preconditioned system matrix can be written as:
\[ P^{-1}M = I_m + U + V, \]
\noindent where $P$ is the preconditioner, $M$ is the normal equations matrix, $U$ is a low-rank matrix, and $V$ is a matrix with small norm. In our case, dropping the part of the normal equations corresponding to $\mathcal{N}$ contributes the small-norm term (that is, $V = A^{\left(\colon,\mathcal{N}\right)}G^{\left(\mathcal{N},\mathcal{N}\right)}(A^{\left(\colon,\mathcal{N}\right)})^\top$, the norm of which is of the order of magnitude of $\mu$), and furthermore dropping a few dense columns (or sparsifying certain rows) contributes the low-rank term (indeed, as already shown in Theorem \ref{Spectral Analysis theorem}, dropping $k_c$ dense columns of $A$ and sparsifying $k_r$ dense rows of $M$ yields at most $2k_r + k_c$ outliers, and thus $\textnormal{rank}(U) \leq 2k_c + k_r$, where, in this case, $k_c$ does not account for columns corresponding to the index set $\mathcal{N}$).
\par To construct such a preconditioner, we first need to note that $\mathcal{R}$ will change at every IPM iteration. However, we can heuristically choose which columns to drop (and/or rows to sparsify) based on the sparsity pattern of $A$. To that end, at the beginning of the optimization procedure, we count the number of non-zeros of each column and row of $A$, respectively. These can then be used to sort the columns and rows of $A$ in descending order of their number of non-zero entries. These sorted columns and rows can easily be represented by means of two permutation matrices $\mathscr{P}_c$ and $\mathscr{P}_r$. We note that this is a heuristic, and it is not guaranteed to identify the most ``problematic'' columns or rows (which can be sources of difficulty for IPMs). For a discussion on such heuristics, and alternatives, the reader is referred to \cite[Section 4]{mybib:AGMX}, and the references therein.
\subsubsection{Numerical results}
\par Initially, we present some results to show the effect of dropping dense columns of $A$ and then of sparsifying dense rows of $M$ using the strategy outlined in Section \ref{Section General Purpose Preconditioner}. Then, we present a comparison between the preconditioner in \eqref{Preconditioner for regularized normal equations} (that is, $P_{NE,(k_c,k_r)}$), the preconditioner given in \eqref{Normal equations preconditioner without dropping dense row/col} or \eqref{LP NE dropping preconditioner} (denoted as $P_{NE,\left(\lvert \mathcal{N}\rvert ,0\right)}$), noting that \eqref{Normal equations preconditioner without dropping dense row/col} and \eqref{LP NE dropping preconditioner} are equivalent, and the one in \eqref{NE Preconditioner using LDL} (that is, $\widetilde{P}_{NE}$). Notice that in the linear programming case, employing $\widetilde{P}_{NE}$ and $P_{NE,\left(\lvert \mathcal{N}\rvert ,0\right)}$ should yield identical results in exact arithmetic. The difference between these preconditioning strategies, is that in the latter case (i.e. $\widetilde{P}_{NE}$) the action of the former preconditioner (i.e. $P_{NE,\left(\lvert \mathcal{N}\rvert ,0\right)}$) is computed implicitly by means of an $LDL^\top$ factorization of $\widetilde{P}_{NE}$, as indicated in Section \ref{Subsection: ldl-based preconditioner}.
\paragraph{Dropping dense columns versus factorizing directly.} We run IP-PMM on all problems from the Netlib collection that have some dense columns, where dense is defined in this case to be a column with at least 15\% non-zero elements. We note that these were the only problems within the Netlib collection having any columns with such a density of non-zero elements. We compare an IP-PMM using Cholesky factorization for the solution of the associated Newton system (Chol.), with an IP-PMM that uses the preconditioner $P_{NE,\left(k_c,0\right)}$ presented in Section \ref{Section General Purpose Preconditioner} alongside PCG. The latter method is only allowed to drop dense columns (at most 30) to create the preconditioner (and thus $k_c \leq 30$). The results are collected in Table \ref{Table: dropping columns}, where $k_c$ denotes the number of dense columns that are dropped to create the preconditioner, \textbf{nnz} denotes the number of non-zero elements present in the Cholesky factor, \textbf{Avg. Krylov It.} denotes the average number of Krylov iterations performed from the inexact approach, while \textbf{Krylov Last} denotes the number of Krylov iterates performed in the last IP iteration of the inexact approach.
\begin{table}[!ht]
\centering
\caption{The effect of dropping dense ($>15\%$) columns of $A$ (Netlib collection).\label{Table: dropping columns}}
\scalebox{0.9}{
\begin{tabular}{llllrrrrrr}
     \toprule
    \multirow{2}{*}{\textbf{Name}}  & \multirow{2}{*}{$\mathbf{k_c}$}   & \multicolumn{2}{c}{\textbf{nnz}}      & \multicolumn{2}{c}{\textbf{time (s)}}  & \multirow{2}{*}{\textbf{Avg. Krylov It.}} & \multirow{2}{*}{\textbf{Krylov Last}} \\   \cmidrule(l{2pt}r{2pt}){3-4} \cmidrule(l{2pt}r{2pt}){5-6}
  & & {Chol.} & {$P_{NE,\left(k_c,0\right)}$} & {Chol.} & {$P_{NE,\left(k_c,0\right)}$} & & \\ \midrule
 BLEND  & 5 & 1,006 & 736 & 0.05 & 0.05 & 14.50 & 14 \\
 FIT1P & $20$ & 197,676 & 26,706 &1.16 & 0.32 & 18.67 &  35 \\
 FIT2P  & 16 & 4,516,500 & 1,962,616 & 27.17 & 16.21 & 17.33 & 33 \\
 FORPLAN &  $8$ & 3,810 & 2,918 & 0.11 & 0.11 & 2.81 & 3  \\
 ISRAEL  & $30$ & 12,261 & 1,744 & 0.14 & 0.30 & 53.50 & 100 \\
 SEBA  & $14$ & 55,937 & 2,238 & 0.35 & 0.18 & 13.83 &  15\\
  \bottomrule
\end{tabular}}
\end{table}
\par From Table \ref{Table: dropping columns} we can immediately see that certain dense columns present in the constraint matrix $A$ can have a significant impact on the sparsity pattern of the Cholesky factors. This is a well-known fact (see for example the discussion in \cite[Section 4]{mybib:AGMX}). Notice that the Netlib collection contains only small- to medium-scale instances. For such problems, memory is not an issue, and hence direct methods tend to be faster than their iterative alternatives (like PCG). Despite the small size of the presented problems, we can see tremendous memory savings  (and even a decrease in CPU time) for problems FIT1P, FIT2P, and SEBA, by eliminating only a small number of dense columns. On the other hand, for problems where we observe an increase in CPU time (e.g. see ISRAEL), the memory savings can be significant, making this acceptable.
\paragraph{Sparsifying dense rows versus factorizing directly.} Next, we consider the case where the inexact version of IP-PMM is only allowed to sparsify dense rows, where dense is defined in this case to be a row with at least 25\% non-zero elements. 
\par Before moving to the numerical results, let us note some differences between sparsifying rows of $M$ and dropping columns of $A$. Firstly, as we have shown in Section \ref{Section General Purpose Preconditioner}, sparsifying $k$ rows can potentially introduce twice as many outliers, while dropping $k$ columns introduces at most $k$ eigenvalue outliers. Furthermore, the potential density induced in the Cholesky factors by a single dense column is usually more significant than that introduced by a single dense row. However, we cannot know in advance how effective the dropping of a column will be. On the other hand, sparsifying dense rows of $M$ introduces a certain separability in the approximate normal equations matrix, allowing us to estimate very well the memory savings.
\par In Table \ref{Table: dropping rows} we compare the direct IP-PMM, to its inexact version, the preconditioner of which is only allowed to sparsify at most 30 dense rows of $M$. Any problem with at least one dense row from the Netlib collection is considered.
\begin{table}[!ht]
\centering
\caption{The effect of dropping dense ($>25\%$) rows of $M$ (Netlib collection).\label{Table: dropping rows}}
\scalebox{0.9}{
\begin{tabular}{llllrrrr}
     \toprule
    \multirow{2}{*}{\textbf{Name}}      & \multirow{2}{*}{$\mathbf{k_r}$}   & \multicolumn{2}{c}{\textbf{nnz}}      & \multicolumn{2}{c}{\textbf{time (s)}} & \multirow{2}{*}{\textbf{Avg. Krylov It.}} & \multirow{2}{*}{\textbf{Krylov Last}} \\   \cmidrule(l{2pt}r{2pt}){3-4} \cmidrule(l{2pt}r{2pt}){5-6}
  & & {Chol.} & {$P_{NE,\left(0,k_r\right)}$} & {Chol.} & {$P_{NE,\left(0,k_r\right)}$}\\ \midrule
 BEACONFD & $17$ & 2,903 & 1,475 &0.07 & 0.09 & 13.59 & 23  \\
 BLEND  & 1 & 1,006 & 959 & 0.05 & 0.04 & 2.88&  3\\
 D6CUBE  & 6 & 55,179 & 52,757 & 0.27 & 0.44 & 12.73 & 18 \\
 FIT1D &  $11$ & 14,726 & 4,973 & 0.22 & 0.38 & 23.50 & 46  \\
 FIT2D  & $12$ & 139,843 & 49,513 & 1.51 & 2.47&  21.30& 44 \\
 ISRAEL  & $3$ & $12,261$ & $11,758$ & 0.14 & 0.14& 6.96 & 8 \\
 STANDATA & 1 & 3,416 & 3,168 & 0.07 & 0.07& 2.94 & 3 \\
 STANDGUB & 1 & 3,418& 3,170 & 0.08 & 0.08& 2.98 & 3\\
 STANDMPS  & 1 & 5,529 & 5,185 & 0.09 & 0.09&2.91 & 3 \\	
 WOOD1P  & $27$ & 19,088 & 13,879 & 0.34 & 0.44 & 14.95& 13 \\
  \bottomrule
\end{tabular}}
\end{table}
\par From Table \ref{Table: dropping rows} we can observe that the required memory to form the Cholesky factors is consistently decreased but CPU time is often increased by the row-dropping strategy. We should note that an increase in CPU time usually relates to the size of the problems under consideration, and CPU time as well as a memory advantages can be observed if the problem is sufficiently large with sufficiently many dense rows. In particular, this row sparsifying strategy was successfully used within IP-PMM in \cite[Section 4]{DeSimDiSerGondPougViol} in order to tackle fMRI sparse approximation problems (in which the constraint matrix contains thousands of dense rows). Memory requirements were significantly lowered, allowing this inexact version to outperform its exact counterpart, while being competitive with standard state-of-the-art first-order methods used to solve such problems. 
\paragraph{The Cholesky versus the $LDL^\top$ approach.} Let us now provide some numerical evidence for the potential benefits and drawbacks of the approach presented in Section \ref{Subsection: ldl-based preconditioner} over that presented in Section \ref{Subsection: Preconditioner for RNE}. To that end, we run three inexact versions of IP-PMM, on some of the most challenging instances within the Netlib collection. The first approach uses the preconditioner given in \eqref{Preconditioner for regularized normal equations} (denoted as $P_{NE,(k_c,k_r)}$, allowing at most 15 dense columns/rows to be dropped/sparsified), the second uses the preconditioner given in \eqref{Normal equations preconditioner without dropping dense row/col} (denoted as $P_{NE,\left( \lvert \mathcal{N} \rvert, 0\right)}$; notice that this is the same as the former preconditioner, without employing the strategy of dropping/sparsifying dense columns/rows), while the third version uses the preconditioner in \eqref{NE Preconditioner using LDL}, denoted as $\widetilde{P}_{NE}$. In all three cases the set $\mathcal{B}$, used to decide which columns are dropped irrespectively of their density, is determined as indicated at the beginning of this section.
\begin{table}[!ht]
\centering
\caption{Cholesky-based versus the $LDL^\top$-based preconditioner (Netlib collection).\label{Table: chol versus ldl Netlib}}
\scalebox{0.8}{
\begin{tabular}{lrrrrrrrrr}
     \toprule
    \multirow{2}{*}{\textbf{Name}}  & \multicolumn{3}{c}{\textbf{Krylov Its.}}     & \multicolumn{3}{c}{\textbf{max nnz}}      & \multicolumn{3}{c}{\textbf{time (s)}}  \\   \cmidrule(l{2pt}r{2pt}){2-4}\cmidrule(l{2pt}r{2pt}){5-7} \cmidrule(l{2pt}r{2pt}){8-10}
 & {$P_{NE,(k_c,k_r)}$} & {$P_{NE,\left( \lvert \mathcal{N} \rvert, 0\right)}$} & {$\widetilde{P}_{NE}$} & {$P_{NE,(k_c,k_r)}$} & {$P_{NE,\left( \lvert \mathcal{N} \rvert, 0\right)}$} & {$\widetilde{P}_{NE}$} & {$P_{NE,(k_c,k_r)}$} & {$P_{NE,\left( \lvert \mathcal{N} \rvert, 0\right)}$} & {$\widetilde{P}_{NE}$}\\ \midrule
 AGG & 2,966 & 2,966 & 2,268 & $1.60 \cdot 10^4$ & $1.60 \cdot 10^4$ & $1.40\cdot 10^4$ & 0.42 & 0.42 & 0.58\\
 DFL001 & 4,778 & 4,778 & 4,969 & $1.55 \cdot 10^6$ & $1.55 \cdot 10^6$ & $1.54 \cdot 10^6$ & 8.40 & 8.40 & 31.51\\
 FIT1P & 1,166 & 636 & 491 & $1.20\cdot10^5$ & $2.00\cdot10^5$ & $1.36\cdot10^4$ & 0.58 & 1.56 & 0.51\\
 FIT2P & 2,555 & 1,880 & 1,867 & $4.19\cdot10^6$ & $4.60\cdot10^6$ & $9.4\cdot10^4$ & 29.45 & 43.10 & 15.95\\
 PILOT & 3,558 & 3,558 & 3,530 & $1.99 \cdot 10^5$ & $1.99 \cdot 10^5$ & $4.02 \cdot 10^5$ & 3.79 & 3.79 & 9.16 \\
 QAP12 & 1,583 & 1,583 & 1,591 & $2.48 \cdot 10^6$ & $2.48 \cdot 10^6$ & $1.61 \cdot 10^6$ & 2.09 & 2.09 & 5.24\\
 QAP15 & 1,704 & 1,704 & 1,708 & $8.83\cdot 10^6$ & $8.83\cdot 10^6$ & $5.28 \cdot 10^6$ &20.56 & 20.56 & 25.42\\
 SEBA & 2,678 & 2,396 & 2,313 & $2.18 \cdot 10^3$ & $5.54\cdot 10^4$ & $7.5\cdot 10^3$ & 0.33 & 0.80 & 0.84\\
  \bottomrule
\end{tabular}}
\end{table}
\par From Table \ref{Table: chol versus ldl Netlib} we can observe that the $LDL^\top$-based preconditioner can provide substantial (memory and/or CPU time) benefits for certain problems (e.g. see problems FIT1P, FIT2P, QAP12, QAP15, SEBA). Nevertheless, we should note that this approach is usually slower, albeit more stable (as a pivot re-ordering is computed at every iteration, and the pivots of the $LDL^\top$ factorization are chosen to ensure stability as well as efficiency). We observe that instances AGG, DFL001, PILOT did not benefit from the use of this strategy, neither in terms of efficiency nor memory requirements, despite a comparable number of Krylov iterations. This comes in line with our observations in Section \ref{Subsection: ldl-based preconditioner}, since neither of the aforementioned instances contains any dense rows or columns. Notice that the stability and efficiency of the preconditioner $\widetilde{P}_{NE}$, depends heavily on the choice for the threshold for the $\texttt{ldl}$ function of MATLAB. Larger values imply better stability, however at the cost of efficiency, since more $2\times 2$ pivots will be chosen during the $LDL^\top$ factorization. The stability of this approach can be guaranteed by using a large-enough pivot threshold.  Additionally, there are instances without dense rows or columns (see QAP12, QAP15), in which the $LDL^\top$-based preconditioner (i.e. $\widetilde{P}_{NE}$) provides significant advantages in terms of memory requirements. Finally, we note that for problems AGG, PILOT, QAP12, and QAP15, the two Cholesky-based variants are exactly the same, as no dense columns or rows were present. 
\par There is a long-standing discussion on the comparison between the Cholesky and the $LDL^\top$ decompositions. The former tend to be faster and usually easier to implement, while the latter tend to be slower, more stable, and more general. For more on this subject, the reader is referred to \cite[Section 4]{mybib:AGMX} and the references therein.

\subsection{Convex quadratic programming}
\par Next, we consider problems of the following form:
\begin{equation} \label{QP problem} \tag{QP}
\min_{x\in \mathbb{R}^n} c^\top x + \frac{1}{2} x^\top H x, \quad \textnormal{s.t.}\ Ax = b,\ x^{\mathcal{I}} \geq 0,\ x^{\mathcal{F}} \textnormal{ free},
\end{equation}
\noindent where $H \in \mathbb{R}^{n \times n}$ is the positive semi-definite Hessian matrix. Let us notice that a similar partitioning of the variables as that presented in Section \ref{Subsection: LP IPM numerics} also holds in this case. Hence, the index set $\mathcal{N}$ guides us on which columns of $A$ to drop. In the case where $H$ is either diagonal, or can be well-approximated by a diagonal, the discussion of Section \ref{Subsection: LP IPM numerics}, about dropping dense columns of $A$ (or sparsifying dense rows of the approximated Schur complement $\widehat{M}$ given in \eqref{eqn: definition of approximated normal equations}), also applies here.
\par In what follows we make use of three different preconditioners. We compare the two block-diagonal preconditioners given in Section \ref{Subsection: block diagonal preconditioners}. The first is called $P^{C}_{AS,\left(k_c,k_r\right)}$ (where the superscript $C$ stands for Cholesky, which is used to invert the $(2,2)$ block of this preconditioner), and employs a diagonal approximation for $Q$, allowing one to drop dense columns and/or sparsify dense rows as shown in Section \ref{Subsection: Preconditioner for RNE}, and the second is called $P^{L}_{AS,\left(\lvert \mathcal{N}\rvert,0\right)}$ (where the superscript $L$ stands for $LDL^\top$), and employs a block-diagonal approximation of $Q$, using the implicit inversion of the Schur complement proposed in Section \ref{Subsection: ldl-based preconditioner}. The block-diagonal preconditioners are also compared against the factorization-based preconditioner presented in Section \ref{Subsection: factorization-based preconditioners for saddle point systems}, termed as $\widehat{P}_{K}$. 

\subsubsection{Numerical results}
\noindent In the following experiments, we employ MINRES to solve the associated Newton systems. Initially, we present the comparison of the three preconditioning strategies over some problems from the Maros--Mészáros collection of convex quadratic programming problems. Then, the two block-diagonal preconditioning approaches are compared over some Partial Differential Equation (PDE) optimization problems. 

\paragraph{Maros--Mészáros collection.}
In Table \ref{Comparison of QP preconditioners (Maros-Meszaros collection)}, we report on the runs of the three methods on a diverse set of non-separable instances within the Maros--Mészáros test set.

\begin{table}[!ht]
\centering
\caption{Comparison of QP preconditioners (Maros--Mészáros collection).\label{Comparison of QP preconditioners (Maros-Meszaros collection)}}
\scalebox{0.78}{
\begin{threeparttable}
\begin{tabular}{lrrrrrrrrr}
     \toprule
    \multirow{2}{*}{\textbf{Name}}  & \multicolumn{3}{c}{\textbf{Krylov Its.}}     & \multicolumn{3}{c}{\textbf{max nnz}}      & \multicolumn{3}{c}{\textbf{time (s)}}  \\   \cmidrule(l{2pt}r{2pt}){2-4}\cmidrule(l{2pt}r{2pt}){5-7} \cmidrule(l{2pt}r{2pt}){8-10}
 & {$P^{C}_{AS,\left(k_c,k_r\right)}$} & {$P^{L}_{AS,\left(\lvert \mathcal{N}\rvert,0\right)}$} & {$\widehat{P}_K$} & {$P^{C}_{AS,\left(k_c,k_r\right)}$} & {$P^{L}_{AS,\left(\lvert \mathcal{N}\rvert,0\right)}$} & {$\widehat{P}_K$} & {$P^{C}_{AS,\left(k_c,k_r\right)}$} & {$P^{L}_{AS,\left(\lvert \mathcal{N}\rvert,0\right)}$} & {$\widehat{P}_K$}\\ \midrule
 CVXQP2\_L & 3,241 & 2,057 & 3,078  & $5.06\cdot10^4$ & $2.35\cdot10^6$ & $1.47\cdot10^6$ & 9.01 & 29.74 & 33.52\\
 CVXQP2\_M & 3,019 & 2,588 & 3,128 & $5.04\cdot10^3$ & $1.15\cdot10^5$ & $6.15\cdot10^4$ & 1.02 & 1.54 & 1.44\\
 DUAL3 & 911 & 543 & 992  & $1.12 \cdot 10^2$ &$6.77\cdot 10^3$ & $6.77\cdot 10^3$ & 0.15  & 0.11  & 0.14 \\
   GOULDQP3 & 1,236 & 1,039 & 814 & $4.89 \cdot 10^3$ &$1.05\cdot 10^4$ & $1.05\cdot 10^4$ & 0.31 & 0.37  & 0.27\\
  MOSARQP2 & 752 & 730 & 803  & $1.71 \cdot 10^4$   & $2.18 \cdot 10^4$ &$2.25 \cdot 10^4$  & 0.12  & 0.16  & 0.18\\
 POWELL20 & 1,531 & 1,537 & 1,538  & $2. 10 \cdot 10^4$  & $6.20 \cdot 10^4$ & $6.20 \cdot 10^4$ & 1.64 & 2.41 & 5.24\\
 Q25FV47 & 6,213 & 5,994 & 8,117& $2.13 \cdot 10^4$ & $6.68\cdot 10^4$& $1.14\cdot 10^5$ & 2.12 &3.75 & 4.61\\
QETAMACRO & 4,901  & 4,661 & 6,378  & $1.13\cdot 10^4$ & $2.62\cdot 10^4$ & $2.46 \cdot 10^4$ & 0.71 & 1.40 & 1.60 \\
QISRAEL & 4,516 & 3,367 & 5,920 & $2.17\cdot 10^2$ & $4.60\cdot 10^3$  & $4.84\cdot 10^3$ & 0.49  & 0.61  & 0.80\\
  QSHIP12L & 5,664  & 5,070 & 5,803 &$1.09 \cdot 10^4$ &$2.06\cdot 10^5$ & $2.10\cdot 10^5$ & 3.01 &4.92 & 5.69\\
 STCQP1 &3,410  & 2,543 & 2,691  &$7.00 \cdot 10^5$ &$1.25\cdot 10^5$ & $1.26\cdot 10^5$ & 7.48 & 5.49 & 5.21\\
STCQP2 &3,074  &2,704 & 1,918 &$5.44 \cdot 10^4$ &$2.13\cdot 10^5$ & $2.13\cdot 10^5$ &4.30 & 6.40 & 3.96\\  
 UBH1 & 681 & 679 & 700 & $8.40 \cdot 10^4$ & $2.13\cdot 10^5$ & $2.13\cdot 10^5$ & 1.97 &  3.78& 4.17\\
\bottomrule
\end{tabular}
\end{threeparttable}}
\end{table}
\par From Table \ref{Comparison of QP preconditioners (Maros-Meszaros collection)}, one can observe that most of the time $P^{C}_{AS,\left(k_c,k_r\right)}$ is rather inexpensive, and naturally requires some additional Krylov iterations. On the other hand, $P^{L}_{AS,\left(\lvert \mathcal{N}\rvert,0\right)}$ delivers faster convergence of the Krylov solver, at the cost of additional memory (since we utilize non-diagonal Hessian information). However, while the same is true for most problems when employing $\widehat{P}_K$, the latter can be prone to numerical inaccuracy (since we do not allow the use of $2\times 2$ pivots in the $LDL^\top$ factorization). Whether the use of non-diagonal Hessian information is beneficial should depend on the problem under consideration. In the above experiments, this proved to be beneficial for only 4 out of the 13 instances tested (that is DUAL3, GOULDQP3, STCQP1, STCQP2). Nevertheless, we can observe that all three approaches are competitive, while $P^{C}_{AS,\left(k_c,k_r\right)}$ and $P^{L}_{AS,\left(\lvert \mathcal{N}\rvert,0\right)}$ are both very stable. 
\paragraph{PDE-constrained optimization instances.}
\par Next, we compare the preconditioning approaches on some Partial Differential Equation (PDE) optimization problems. In particular, we consider the $\text{L}^1/\text{L}^2$-regularized Poisson control problem, as well as the $\text{L}^1/\text{L}^2$-regularized convection--diffusion control problem with control bounds. We should emphasize at this point that while beskope preconditioners have been created for PDE problems of this form, here we treat the discretized problems as if we hardly knew anything about their structure, to demonstrate the generality of the approaches presented in this paper.
\par We consider problems of the following form:
\begin{equation} \label{generic inverse problem}
\begin{split}
\min_{\mathrm{y},\mathrm{u}} \       &\ \mathrm{J}(\mathrm{y}(\bm{x}),\mathrm{u}(\bm{x})) \coloneqq \frac{1}{2}\| \rm{y} - \bar{\rm{y}}\|_{L^2(\Omega)}^2 + \frac{\alpha_1}{2}\|\rm{u}\|_{L^1(\Omega)} + \frac{\alpha_2}{2}\|\rm{u}\|_{L^2(\Omega)}^2, \\
\text{s.t.}\ &\ \mathrm{D} \mathrm{y}(\bm{x})  = \mathrm{u}(\bm{x}) + \mathrm{g}(\bm{x}),\\
       &\ \mathrm{u_{a}}(\bm{x}) \leq \mathrm{u}(\bm{x})  \leq \mathrm{u_{b}}(\bm{x}),
\end{split}
\end{equation}
\noindent where $(\rm{y},\rm{u}) \in \text{H}^1(\Omega) \times \text{L}^2(\Omega)$, $\mathrm{D}$ denotes some linear differential operator associated with the differential equation, $\bm{x}$ is a $2$-dimensional spatial variable, and $\alpha_1,\ \alpha_2 \geq 0$ denote the regularization parameters of the control variable. We note that other variants for $\mathrm{J}({\rm y},{\rm u})$ are possible, including measuring the state misfit and/or the control variable in other norms, as well as alternative weightings within the cost functionals. In particular, the methods tested here also work well for $\text{L}^2$-norm problems (e.g. see \cite{PearsonGondzioNumMath}). We consider problems of the form of \eqref{generic inverse problem} to create an extra level of difficulty for our solvers.
\par The problem is considered on a given compact spatial domain $\Omega$, where $\Omega \subset \mathbb{R}^{2}$ has boundary $\partial \Omega$, and is equipped with Dirichlet boundary conditions. The algebraic inequality constraints are assumed to hold a.e. on $\Omega$. We further note that ${\rm u_a}$ and ${\rm u_b}$ may take the form of constants, or functions in spatial variables, however we restrict our attention to the case where these represent constants. 
\par Problems in the form of \eqref{generic inverse problem} are often solved numerically, by means of a discretization method. In the following experiments we employ the Q1 finite element discretization implemented in IFISS\footnote{\url{https://personalpages.manchester.ac.uk/staff/david.silvester/ifiss/default.htm}} (see \cite{IFISSACM,IFISSSIAMREVIEW}). Applying the latter yields a sequence of non-smooth convex programming problems, which can be transformed to the smooth form of \eqref{QP problem}, by introducing some auxiliary variables to deal with the $\ell_1$ terms appearing in the objective (see \cite[Section 2]{PearsonPorcelliStollNLAA}). In order to restrict the memory requirements of the approach, we consider an additional approximation of $H$ in the preconditioner $P^{L}_{AS,\left(\lvert \mathcal{N}\rvert,0\right)}$. In the cases under consideration, the resulting Hessian matrix takes the following form:
\[H = \begin{bmatrix}
J_M &0_{d,d} & 0_{d,d}\\
0_{d,d} & \alpha_2 J_M & -\alpha_2 J_M\\
0_{d,d} & -\alpha_2 J_M & \alpha_2 J_M
\end{bmatrix}, \]
\noindent where $J_M$ is the mass matrix of size $d$. When non-diagonal Hessian information is utilized within the preconditioner, we approximate each block of $H$ by its diagonal (i.e. $\widetilde{J}_M = \textnormal{Diag}(J_M)$; an approximation which is known to be optimal \cite{WathenIMANumAnal}). The resulting matrix is then further approximated as discussed in Section \ref{Section: regularized saddle point matrices}. From now on, the $LDL^\top$ preconditioner, which is based on an approximation of $P^{L}_{AS,\left(\lvert \mathcal{N}\rvert,0\right)}$, is referred to as $\widehat{P}^{L}_{AS,\left(\lvert \mathcal{N}\rvert,0\right)}$, in order to stress the additional level of approximation employed within the Hessian matrix. For these examples, the preconditioning strategy based on $\widehat{P}_K$ (given in Section \ref{Subsection: factorization-based preconditioners for saddle point systems}) behaved significantly worse, and hence was not included in the numerical results. The preconditioner $\widehat{P}^{L}_{AS,\left(\lvert \mathcal{N}\rvert,0\right)}$ can be useful in that it allows us to employ block-diagonal preconditioners of which the Schur complement takes into account non-diagonal information of the Hessian matrix $H$. In certain cases, this can result in a faster convergence of IP-PMM, as compared to ${P}^{C}_{AS,\left(k_c,k_r\right)}$ (see Table \ref{Table Convection-Diffusion Control Problem: increasing grid-size}).
\par The first problem that we consider is the two-dimensional $\text{L}^1/\text{L}^2$-regularized Poisson optimal control problem, with bound constraints on the control and free state, posed on the domain $\Omega = (0,1)^2$. Following \cite[Section 5.1]{PearsonPorcelliStollNLAA}, we consider the constant control bounds $\rm{u_a} = -2$, $\rm{u_b} = 1.5$, and the desired state $\bar{\rm{y}} = \sin(\pi x_1)\sin(\pi x_2)$. In Table \ref{Table Poisson Control Problem: increasing grid-size}, we fix $\alpha_2 = 10^{-2}$ (which we find to be the most numerically interesting case), and we present the runs of the method using the different preconditioning approaches, with increasing problem size, and varying $\text{L}^1$ regularization parameter (that is $\alpha_1$). To reflect the change in the grid size, we report the number of variables of the optimization problem after transforming it to the IP-PMM format. We also report the overall number of Krylov iterations required for IP-PMM to converge (and the number of IP-PMM iterations in brackets), the maximum number of non-zeros stored in order to apply the inverses of the associated preconditioners, as well as the required CPU time.
\begin{table}[!ht]
\centering
\caption{Comparison of QP preconditioners (Poisson Control: problem size and varying regularization).\label{Table Poisson Control Problem: increasing grid-size}}
\scalebox{0.9}{
\begin{tabular}{llrrrrrr}     
\toprule
    \multirow{2}{*}{\textbf{$\bm{n}$}} & \multirow{2}{*}{$\bm{\alpha_1}$} & \multicolumn{2}{c}{\textbf{Krylov (IP) Its.}}     & \multicolumn{2}{c}{\textbf{max nnz}}      & \multicolumn{2}{c}{\textbf{time (s)}}  \\   \cmidrule(l{2pt}r{2pt}){3-4}\cmidrule(l{2pt}r{2pt}){5-6} \cmidrule(l{2pt}r{2pt}){7-8}
&   &  {${P}^{C}_{AS,\left(k_c,k_r\right)}$} & {$\widehat{P}^{L}_{AS,\left(\lvert \mathcal{N}\rvert,0\right)}$}  & {${P}^{C}_{AS,\left(k_c,k_r\right)}$} & {$\widehat{P}^{L}_{AS,\left(\lvert \mathcal{N}\rvert,0\right)}$} & {${P}^{C}_{AS,\left(k_c,k_r\right)}$} & {$\widehat{P}^{L}_{AS,\left(\lvert \mathcal{N}\rvert,0\right)}$} \\ \midrule
 \multirow{3}{*}{$2.11\cdot 10^4$} & $10^{-2}$ & 1,353 (13) & 868 (13) & $3.88\cdot10^5$ & $4.65\cdot10^5$  & 5.81  & 5.91  \\
  &   $10^{-4}$ & 1,586 (14)& 1,015 (14)& $3.88\cdot10^5$ & $4.65\cdot10^5$  & 6.53 & 6.58 \\
  &   $10^{-6}$ & 1,586 (14)& 1,013 (14)& $3.88\cdot10^5$ & $4.65\cdot10^5$  & 6.75 & 6.61 \\ \hdashline
 \multirow{3}{*}{$8.32\cdot 10^4$} & $10^{-2}$ & 1,327 (14) & 887 (14)  & $2.12\cdot10^6$ & $2.21\cdot10^6$ & 22.67 & 39.60 \\
 &   $10^{-4}$ & 1,759 (15) & 1,054 (15) & $2.12\cdot10^6$ & $2.21\cdot10^6$ & 27.58 & 44.14 \\
  &   $10^{-6}$ & 1,575 (14)& 934 (14) & $2.12\cdot10^6$ & $2.21\cdot10^6$ & 24.83 & 40.23 \\ \hdashline
  \multirow{3}{*}{$3.30\cdot 10^5$}  & $10^{-2}$ & 246 (8) & 203 (8) & $1.06\cdot10^7$ & $1.09\cdot10^7$  & 27.52 & 75.89 \\
   &   $10^{-4}$& 246 (8) & 204 (8)& $1.06\cdot10^7$ & $1.09\cdot10^7$  & 27.20 & 76.73 \\
   &   $10^{-6}$ & 246 (8) & 204 (8)& $1.06\cdot10^7$ & $1.09\cdot10^7$  & 27.38 & 77.15 \\ \hdashline
   \multirow{3}{*}{$1.32\cdot 10^6$} & $10^{-2}$ & 193 (7) & 158 (7) & $5.51\cdot10^7$ & $5.38\cdot10^7$  & 99.14 & 308.79\\ 
       &   $10^{-4}$ & 193 (7) & 158 (7) & $5.51\cdot10^7$ & $5.38\cdot10^7$  & 101.78 & 318.35\\
   &   $10^{-6}$ & 193 (7) & 158 (7) & $5.51\cdot10^7$ & $5.38\cdot10^7$ & 99.71 & 318.01\\
\bottomrule
\end{tabular}}
\end{table}

\par We can draw several observations from the results in Table \ref{Table Poisson Control Problem: increasing grid-size}. Firstly, one can observe that in this case, a diagonal approximation of $H$ is sufficiently good to deliver very fast convergence of MINRES. The block-diagonal preconditioner using non-diagonal Hessian information (i.e. $\widehat{P}^{L}_{AS,\left(\lvert \mathcal{N}\rvert,0\right)}$) required consistently fewer MINRES iterations (and not necessarily more memory; see the three largest experiments), however, this did not result in a reduction in CPU time. There are several reasons for this. Firstly, the Hessian of the problem becomes ``no less" diagonally dominant as the problem size is increased. As a result, the diagonal approximation of it remains robust with respect to the problem size for the problem under consideration. On the other hand, the algorithm uses the built-in MATLAB function \texttt{ldl} to factorize the preconditioner $\widehat{P}^{L}_{AS,\left(\lvert \mathcal{N}\rvert,0\right)}$. While this implementation is very stable, it employs a dynamic permutation at each IP-PMM iteration, which slows down the algorithm. In this case, a specialized method using preconditioner $\widehat{P}^{L}_{AS,\left(\lvert \mathcal{N}\rvert,0\right)}$ should employ a separate symbolic factorization step, that could be used in subsequent IP-PMM iterations, thus significantly reducing the CPU time. This is not done here, however, as we treat these PDE-optimization problems as black-box (notice that the implementation allows the user to feed an approximation of the Hessian, but does not allow the user to use a different $LDL^\top$ decomposition). In all the previous runs, the reported Krylov iterations include both the predictor and the corrector steps of IP-PMM. Thus, the systems solved in each case are twice the number of IP iterations. We should note that for the problem under consideration employing a predictor--corrector scheme is not necessary, however, we wanted to keep the implementation as general and robust as possible, without tailoring it to specific applications. For this problem, we can also observe that IP-PMM was robust with respect to the problem size (i.e. IP-PMM convergence was not significantly affected by the size of the problem). This is often observed when employing an IPM for the solution of PDE optimization problems (e.g. see \cite{PearsonGondzioNumMath}), however, in theory one should expect dependence of IPM on the problem size.
\par Next we consider the optimal control of the convection--diffusion equation, i.e. $-\epsilon \rm{\Delta y} + \rm{w} \cdot \nabla y = u$, on the domain $\Omega = (0,1)^2$, where $\rm{w}$ is the wind vector given by $\rm{w} = [2x_2(1-x_1)^2, -2x_1(1-x_2^2)]^\top$, with control bounds $\rm{u_a} = -2$, $\rm{u_b} = 1.5$ and free state (e.g. see \cite[Section 5.2]{PearsonPorcelliStollNLAA}). Once again, the problem is discretized using Q1 finite elements, employing the Streamline Upwind Petrov-Galerkin (SUPG) upwinding scheme implemented in \cite{BrooksHughesCMAME}. We define the desired state as $\rm{\bar{y}} = exp(-64((x_1 - 0.5)^2 + (x_2 - 0.5)^2))$ with zero boundary conditions. The diffusion coefficient $\epsilon$ is set as $\epsilon = 0.02$. The $\text{L}^2$ regularization parameter $\alpha_2$ is set as $\alpha_2 = 10^{-2}$. We run IP-PMM with the two different preconditioning approaches on the aforementioned problem, with different $\text{L}^1$ regularization values (i.e. $\alpha_1$) and with increasing problem size. The results are collected in Table \ref{Table Convection-Diffusion Control Problem: increasing grid-size}.
\begin{table}[!ht]
\centering
\caption{Comparison of QP preconditioners (Convection--Diffusion Control: problem size and varying regularization).\label{Table Convection-Diffusion Control Problem: increasing grid-size}}
\scalebox{0.9}{
\begin{tabular}{llrrrrrr}     
\toprule
    \multirow{2}{*}{\textbf{$\bm{n}$}} & \multirow{2}{*}{$\bm{\alpha_1}$} & \multicolumn{2}{c}{\textbf{Krylov (IP) Its.}}     & \multicolumn{2}{c}{\textbf{max nnz}}      & \multicolumn{2}{c}{\textbf{time (s)}}  \\   \cmidrule(l{2pt}r{2pt}){3-4}\cmidrule(l{2pt}r{2pt}){5-6} \cmidrule(l{2pt}r{2pt}){7-8}
&   &  {${P}^{C}_{AS,\left(k_c,k_r\right)}$} & {$\widehat{P}^{L}_{AS,\left(\lvert \mathcal{N}\rvert,0\right)}$}  & {${P}^{C}_{AS,\left(k_c,k_r\right)}$} & {$\widehat{P}^{L}_{AS,\left(\lvert \mathcal{N}\rvert,0\right)}$} & {${P}^{C}_{AS,\left(k_c,k_r\right)}$} & {$\widehat{P}^{L}_{AS,\left(\lvert \mathcal{N}\rvert,0\right)}$} \\ \midrule
 \multirow{3}{*}{$2.11\cdot 10^4$} & $10^{-2}$ & 3,947 (21) & 1,903 (19) & $3.88\cdot10^5$ & $4.65\cdot10^5$  & 15.94 & 11.67 \\
  &   $10^{-4}$ & 7,546 (25) & 2,721 (22)& $3.88\cdot10^5$ & $4.65\cdot10^5$  &  31.24 & 15.31 \\
  &   $10^{-6}$ & 7,489 (25) & 2,962 (23) & $3.88\cdot10^5$ & $4.65\cdot10^5$  & 31.29 & 16.42 \\ \hdashline
 \multirow{3}{*}{$8.32\cdot 10^4$} & $10^{-2}$ & 3,464 (19) & 1,937 (19) & $2.12\cdot10^6$ & $2.21\cdot10^6$ & 49.05 & 65.98 \\
 &   $10^{-4}$ & 7,198 (25) & 2,976 (23)  & $2.12\cdot10^6$ & $2.21\cdot10^6$ & 99.49 & 93.19 \\
  &   $10^{-6}$ & 7,150 (25) & 3,178 (24) & $2.12\cdot10^6$ & $2.21\cdot10^6$ & 98.82 &  98.90 \\ \hdashline
  \multirow{3}{*}{$3.30\cdot 10^5$}  & $10^{-2}$ & 4,037 (21) & 1,667 (18) & $1.07\cdot10^7$ & $1.09\cdot10^7$  & 297.96 & 285.77 \\
   &   $10^{-4}$& 4,971 (22) & 2,542 (22) & $1.07\cdot10^7$ & $1.09\cdot10^7$  & 357.43 & 334.89 \\
   &   $10^{-6}$ & 5,418 (23) & 2,530 (22) & $1.07\cdot10^7$ & $1.09\cdot10^7$  & 372.40 & 354.23\\ \hdashline
   \multirow{3}{*}{$1.32\cdot 10^6$} & $10^{-2}$ & 384 (9) & 284 (9) & $5.51\cdot10^7$ & $5.38\cdot10^7$  & 158.17 & 436.49\\ 
       &   $10^{-4}$ & 385 (9) & 286 (9) & $5.51\cdot10^7$ & $5.38\cdot10^7$  & 161.12 & 438.44\\
   &   $10^{-6}$ & 385 (9) & 286 (9) & $5.51\cdot10^7$ & $5.38\cdot10^7$  & 165.57 & 443.67\\
\bottomrule
\end{tabular}}
\end{table}
\par In Table \ref{Table Convection-Diffusion Control Problem: increasing grid-size} we can observe that the IP-PMM convergence is improved when the problem size is increased, which relates to the good conditioning of the Hessian. On the other hand, IP-PMM convergence is affected by the $\text{L}^1$ regularization parameter $\alpha_1$. Unlike in the Poisson control problem, we can see clear advantages of using $\widehat{P}^{L}_{AS,\left(\lvert \mathcal{N}\rvert,0\right)}$ instead of ${P}^{C}_{AS,\left(k_c,k_r\right)}$ in this case. We can observe that in this problem using non-diagonal Hessian information within the preconditioner is significantly more important, and the reduced number of Krylov iterations often translates into a reduction of the CPU time. As before, we should mention that the reported number of Krylov iterations includes the solution of both the predictor and the corrector steps for each IP-PMM iteration.
\par Overall, we observe that each of the presented approaches can be very successful on a wide range of problems, including those of very large scale. Although we have treated every problem as if we knew nothing about its structure for these numerical tests, our a priori knowledge of the preconditioners and of the problem's structure could in principle aid us in selecting a preconditioner without compromising their ``general purpose" nature.

\section{Conclusions} \label{Section Conclusions}
In this paper we have presented several general-purpose preconditioning methodologies suitable for primal-dual regularized interior point methods, applied to convex optimization problems. All presented preconditioners are positive definite and hence can be used within symmetric solvers such as PCG or MINRES. After analyzing and discussing the different preconditioning approaches, we have presented extensive numerical results, showcasing their use and potential benefits for different types of practical applications of convex optimization. A robust and general IP-PMM implementation, using the proposed preconditioners, has been provided for the solution of convex quadratic programming problems, and one can readily observe its ability of reliably and efficiently solving general large-scale problems, with minimal input from the user. 
\par As a future research direction, we would like to include certain matrix-free preconditioning methodologies that could be used as alternatives for huge-scale instances that cannot be solved by means of factorization-based preconditioners, due to memory requirements.

\section*{Acknowledgments} 
JG and SP were supported by the Google project ``Fast (1 + $x$)-order Methods for Linear Programming". JWP gratefully acknowledges support from the Engineering and Physical Sciences Research Council (UK) grant EP/S027785/1. We wish to remark that this study does not have any conflict of interest to disclose.

\appendix
\section{Proof of Theorem \ref{Theorem Spectral analysis augmented system}} \label{Appendix: proof of spectral analysis for augmented system}
For simplicity of notation, let $P_{AS,(k_c,k_r)} \equiv P_{AS}$. In order to provide an outline of the proof of Theorem \ref{Theorem Spectral analysis augmented system}, which follows trivially by extending the result in \cite[Theorem 3]{NLAA:BergGondMartPearPoug}, we need to introduce the notion of the Rayleigh quotient for symmetric matrices. The numerical range of a symmetric matrix $U$, denoted as $q(U)$, is defined as 
\[q(U) \coloneqq \left\{z \in \mathbb{R}, \textnormal{ s.t.}\ z = \frac{x^\top U x}{x^\top x},\textnormal{ for some } x \in \mathbb{R}^n,\ x\neq 0\right\}.\]
\par Given the notation of Section \ref{Subsection: block diagonal preconditioners}, an  element of the Rayleigh quotient of these matrices is represented as:
\[\begin{array}{lcllcl}
	\gamma_{NE} & \in & q\left(\widetilde{M}_{NE}\right) = [\alpha_{NE}, \beta_{NE}],  ~~
	& ~~\gamma_{F}  &\in &q\left(\widetilde{F}\right) =  [\alpha_{F}, \beta_{F}]. 
\end{array}\] 
\noindent Similarly, an arbitrary element of $q(P_{NE})$ is denoted by
\[\begin{array}{lcllclcl}
\gamma_{p} & \in & [\lambda_{\min}(P_{NE}),\lambda_{\max} (P_{NE})] &\subseteq & \left[\delta,\dfrac{\sigma_{\max}^2(A)}{\rho} + \delta\right].
	\end{array}\]
		  The eigenvalues of $P_{AS}^{-1} K$ are the same as those of  
		\[P_{AS}^{-1/2}K P_{AS}^{-1/2}  = 
		\begin{bmatrix} \widehat{F}^{-1/2} & 0_{n,m} \\ 0_{m,n} & P_{NE}^{-1/2}  \end{bmatrix}
\begin{bmatrix} -F & A^\top \\ A & \delta I_m \end{bmatrix}
			\begin{bmatrix} \widehat{F}^{-1/2} & 0_{n,m} \\ 0_{m,n} & P_{NE}^{-1/2}  \end{bmatrix}
				=
	\begin{bmatrix} -\widetilde{F} & R^\top \\ R & %\delta P_{NE}^{-1}
	\delta P_{NE}^{-1} 	\end{bmatrix},  \]
%			\widehat F =  \widetilde{F}^{-1/2} F  \widetilde{F}^{-1/2}, \quad
			where  $\widetilde{F}$ is defined in \eqref{simsym} and
			$R \coloneqq P_{NE}^{-1/2} A \widehat{F}^{-1/2}.$
Any eigenvalue $\lambda$ of the preconditioned matrix $P_{AS}^{-1/2} K P_{AS}^{-1/2} $
		must therefore satisfy
		\begin{eqnarray}
			\label{V-first}
		-\widetilde{F} w_1\ +&  R^\top w_2 &= \lambda w_1, \\
			\label{V-second}
		R w_1\ +&   \delta P_{NE}^{-1} w_2  &= \lambda w_2. \end{eqnarray}
		First, note that
		\begin{equation*}%\label{V-Definition_of_R_k R_k^T}
			 \scalemath{0.9}{ R R^\top = P_{NE}^{-1/2} A \widehat{F}^{-1} A^\top P_{NE}^{-1/2} = P_{NE}^{-1/2} \left(\widehat{M} - \delta I_m\right) P_{NE}^{-1/2} = \widetilde{M}_{NE} -
		\delta P_{NE}^{-1}. }
		\end{equation*}
		If $\lambda  \not \in [-\beta_{F}, -\alpha_{F}]$ then $\widetilde{F} + \lambda I_n$ is symmetric positive (or negative) definite;
		moreover $R^\top w_2 \ne 0_n$.
		Then from \eqref{V-first} we obtain
		\[ w_1 = \left(\widetilde{F} + \lambda I_n\right)^{-1} R^\top w_2, \]
		which, after substituting in \eqref{V-second}, yields
		\[ R  \left(\widetilde{F} + \lambda I_n\right)^{-1} R^\top  w_2  + \delta P_{NE}^{-1} w_2= \lambda w_2. \]
		Premultiplying by $w_2^\top$ and dividing by $\|w_2\|^2$, we obtain the following equation, where we have set $z = R^\top w_2$:
		\[ \lambda  = \frac{z^\top \left(\widetilde{F} + \lambda I_n\right)^{-1} z}{z^\top z}
		\frac {w_2^\top R R^\top w_2}{w_2^\top w_2}  +\delta \frac{w_2^\top P_{NE}^{-1} w_2}{w_2^\top w_2} = \frac{1}{\gamma_{F} + \lambda} \left(\gamma_{NE} - \frac{\delta}{\gamma_p}\right)
		+  \frac{\delta}{\gamma_{p}}. \]
		Hence, $\lambda$ must satisfy the following second-order algebraic equation:
		\[ \lambda^2 +  \left(\gamma_{F} -\omega \right)\lambda-\left(\omega (\gamma_F - 1)+\gamma_{NE}\right) = 0,\]
		where we have set $\omega = \dfrac{\delta}{\gamma_{p}}$ satisfying $\omega \leq 1$.  Notice that $\gamma_{NE} - \omega \geq 0$ by construction.
\par All bounds, except for the lower bound of $I_{+}$, follow directly by following the developments in \cite[Theorem 3]{NLAA:BergGondMartPearPoug}. Thus, we only derive the lower bound for the positive eigenvalues of the preconditioned matrix, which can be obtained by computing a lower bound for the positive eigenvalue solution of the previous algebraic equation. In particular, we have
\begin{equation*}
\begin{split}
\lambda_+ =& \frac{1}{2}\left[\omega-\gamma_{F} + \sqrt{(\gamma_{F}-\omega)^2+ 4(\omega \gamma_{F} -\omega + \gamma_{NE})}\right] \\=& \frac{1}{2}\left[\omega-\gamma_{F} + \sqrt{(\gamma_{F}+\omega)^2+ 4(\gamma_{NE}
-\omega) }\right].
\end{split}
\end{equation*}
We notice that $\lambda_+$ is a decreasing function with respect to the variable $\gamma_{F}$ and increasing with respect to $\gamma_{NE}$. Hence, we have that:
\begin{equation*}
\begin{split}
	\lambda_+ \geq \ & \frac{1}{2}\left[\omega - \beta_{F} + \sqrt{(\beta_{F} + \omega)^2 + 4(\alpha_{NE}-\omega)} \right]	\\
 \geq \ & \frac{1}{2}\left[ -\beta_{F} + \sqrt{\beta_{F}^2 + 4\alpha_{NE}}\right], 
\end{split}
\end{equation*}
where the last inequality follows because the penultimate expression is increasing with respect to $\omega$.

\bibliography{references} 

\begin{thebibliography}{10}

\bibitem{AltGondzio:OMS}
{\sc A.~Altman and J.~Gondzio}, {\em {R}egularized {S}ymmetric {I}ndefinite
  {S}ystems in {I}nterior {P}oint {M}ethods for {L}inear and {Q}uadratic
  {O}ptimization}, Optimization Methods and Software, 11 (1999), pp.~275--302,
  \url{https://doi.org/10.1080/10556789908805754}.

\bibitem{mybib:AGMX}
{\sc E.~D. Andersen, J.~Gondzio, C.~M{\'e}sz{\'a}ros, and X.~Xu}, {\em
  {I}mplementation of {I}nterior {P}oint {M}ethods for {L}arge {S}cale {L}inear
  {P}rogramming}, in Interior Point Methods in Mathematical Programming,
  T.~Terlaky, ed., Kluwer Academic Publishers, 1996, pp.~189--252,
  \url{https://doi.org/10.1007/978-1-4613-3449-1_6}.

\bibitem{ArmandOmheniJOTA}
{\sc P.~Armand and R.~Omheni}, {\em {A} {M}ixed {L}ogarithmic
  {B}arrier-{A}ugmented {L}agrangian {M}ethod for {N}onlinear {O}ptimization},
  Journal of Optimization Theory and Applications, 173 (2017), pp.~523--547,
  \url{https://doi.org/10.1007/s10957-017-1071-x}.

\bibitem{BellaviaetalCOAP}
{\sc S.~Bellavia, V.~De~Simone, D.~di~Serafino, and B.~Morini}, {\em {O}n the
  {U}pdate of {C}onstraint {P}reconditioners for {R}egularized {KKT}
  {S}ystems}, Computational Optimization and Applications, 65 (2016),
  pp.~339--360, \url{https://doi.org/10.1007/s10589-016-9830-4}.

\bibitem{BenziCompPhys}
{\sc M.~Benzi}, {\em {P}reconditioning {T}echniques for {L}arge {L}inear
  {S}ystems: {A} {S}urvey}, Journal of Computational Physics, 182 (2022),
  pp.~418--477, \url{https://doi.org/10.1006/jcph.2002.7176}.

\bibitem{BenGolLieActaNum}
{\sc M.~Benzi, G.~H. Golub, and J.~Liesen}, {\em {N}umerical {S}olution of
  {S}addle {P}oint {S}ystems}, Acta Numerica, 14 (2005), pp.~1--137,
  \url{https://doi.org/10.1017/S0962492904000212}.

\bibitem{NLAA:BergGondMartPearPoug}
{\sc L.~Bergamaschi, J.~Gondzio, A.~Martínez, J.~W. Pearson, and
  S.~Pougkakiotis}, {\em {A} {N}ew {P}reconditioning {A}pproach for an
  {I}nterior {P}oint-{P}roximal {M}ethod of {M}ultipliers for {L}inear and
  {C}onvex {Q}uadratic {P}rogramming}, Numerical Linear Algebra with
  Applications, 28 (2020), p.~e2361, \url{https://doi.org/10.1002/nla.2361}.

\bibitem{BergGonZilCOAP}
{\sc L.~Bergamaschi, J.~Gondzio, M.~Venturin, and G.~Zilli}, {\em {I}nexact
  {C}onstraint {P}reconditioners for {L}inear {S}ystems {A}rising in {I}nterior
  {P}oint {M}ethods}, Computational Optimization and Applications, 36 (2007),
  pp.~137--147, \url{https://doi.org/10.1007/s10589-006-9001-0}.
\newblock see also {Erratum}, Computational Optimization and Applications
  \textbf{49}, 401--406 (2011).

\bibitem{oldCOAPBergGondZill}
{\sc L.~Bergamaschi, J.~Gondzio, and G.~Zilli}, {\em {P}reconditioning
  {I}ndefinite {S}ystems in {I}nterior {P}oint {M}ethods for {O}ptimization},
  Computational Optimization and Applications, 28 (2004), pp.~149--171,
  \url{https://doi.org/10.1023/B:COAP.0000026882.34332.1b}.

\bibitem{BCOCOAP}
{\sc S.~Bocanegra, F.~Campos, and A.~Oliveira}, {\em {U}sing a {H}ybrid
  {P}reconditioner for {S}olving {L}arge-{S}cale {L}inear {S}ystems arising
  from {I}nterior {P}oint {M}ethods}, Computational Optimization and
  Applications, 36 (2007), pp.~149--164,
  \url{https://doi.org/10.1007/s10589-006-9009-5}.

\bibitem{BrooksHughesCMAME}
{\sc A.~N. Brooks and T.~J.~R. Hughes}, {\em {S}treamline
  {U}pwind/{P}etrov-{G}alerkin {F}ormulations for {C}onvection {D}ominated
  {F}lows with {P}articular {E}mphasis on the {I}ncompressible
  {N}avier-{S}tokes {E}quations}, Computer Methods in Applied Mechanics and
  Engineering, 32 (1982), pp.~199--259,
  \url{https://doi.org/10.1016/0045-7825(82)90071-8}.

\bibitem{Cafieri_et_al_COAP}
{\sc S.~Cafieri, M.~D'Appuzo, V.~De~Simone, and D.~di~Serafino}, {\em
  {S}topping {C}riteria for {I}nner {I}terations in {I}nexact {P}otential
  {R}eduction {M}ethods: {A} {C}omputational {S}tudy}, Computational
  Optimization and Applications, 36 (2007), pp.~165--193,
  \url{https://doi.org/10.1007/s10589-006-9007-7}.

\bibitem{DappuzoSimonediSerafinoCOAP}
{\sc M.~{D'Apuzzo}, V.~{De~Simone}, and D.~{di~Serafino}}, {\em {O}n {M}utual
  {I}mpact of {N}umerical {L}inear {A}lgebra and {L}arge-{S}cale {O}ptimization
  with {F}ocus on {I}nterior {P}oint {M}ethods}, Computational Optimization and
  Applications, 45 (2010), pp.~283--310,
  \url{https://doi.org/10.1007/s10589-008-9226-1}.

\bibitem{DeSimDiSerGondPougViol}
{\sc V.~De~Simone, D.~di~Serafino, J.~Gondzio, S.~Pougkakiotis, and M.~Viola},
  {\em {S}parse {A}pproximations with {I}nterior {P}oint {M}ethods},
  arXiv:2102.13608,  (2021).
\newblock Accepted for publication in SIAM Review.

\bibitem{diSerafinoOrbanSIAMSCIJ}
{\sc D.~di~Serafino and D.~Orban}, {\em {C}onstraint-{P}reconditioned {K}rylov
  {S}olvers for {R}egularized {S}addle-{P}oint {S}ystems}, SIAM Journal on
  Scientific Computing, 43 (2021), pp.~A1001--A1026,
  \url{https://doi.org/10.1137/19M1291753}.

\bibitem{SIMAX:Dollar}
{\sc H.~S. Dollar}, {\em {C}onstraint-{S}tyle {P}reconditioners for
  {R}egularized {S}addle {P}oint {P}roblems}, SIAM Journal on Matrix Analysis
  and Applications, 29 (2007), pp.~672--684,
  \url{https://doi.org/10.1137/050626168}.

\bibitem{COAP:DollarGouldSchildersWathen}
{\sc H.~S. Dollar, N.~I.~M. Gould, W.~H.~A. Schilders, and A.~J. Wathen}, {\em
  {U}sing {C}onstraint {P}reconditioners with {R}egularized {S}addle-{P}oint
  {P}roblems}, Computational Optimization and Applications, 36 (2007),
  pp.~249--270, \url{https://doi.org/10.1007/s10589-006-9004-x}.

\bibitem{IFISSACM}
{\sc H.~C. Elman, A.~Ramage, and D.~J. Silvester}, {\em {A}lgorithm 866:
  {IFISS}, a {M}atlab {T}oolbox for {M}odelling {I}ncompressible {F}low}, ACM
  Transactions on Mathematical Software, 33 (2007), p.~14,
  \url{https://doi.org/10.1145/1236463.1236469}.

\bibitem{IFISSSIAMREVIEW}
{\sc H.~C. Elman, A.~Ramage, and D.~J. Silvester}, {\em {IFISS}: {A}
  {C}omputational {L}aboratory for {I}nvestigating {I}ncompressible {F}low
  {P}roblems}, SIAM Review, 52 (2014), pp.~261--273,
  \url{https://doi.org/10.1137/120891393}.

\bibitem{FriedOrbanMathProgComp}
{\sc M.~P. Friedlander and D.~Orban}, {\em {A} {P}rimal-{D}ual {R}egularized
  {I}nterior-{P}oint {M}ethod for {C}onvex {Q}uadratic {P}rograms},
  Mathematical Programming Computation, 4 (2012), pp.~71--107,
  \url{https://doi.org/10.1007/s12532-012-0035-2}.

\bibitem{GillMurrayPonceleSIMAX}
{\sc P.~E. Gill, W.~Murray, D.~B. Poncele\'on, and M.~A. Saunders}, {\em
  {P}reconditioners for {I}ndefinite {S}ystems {A}rising in {O}ptimization},
  SIAM Journal on Matrix Analysis and Applications, 13 (1992), pp.~292--311,
  \url{https://doi.org/10.1137/0613022}.

\bibitem{GreenbaumSIAM}
{\sc A.~Greenbaum}, {\em {I}terative {M}ethods for {S}olving {L}inear
  {S}ystems}, Frontiers in Applied Mathematics. SIAM, Philadelphia, USA, 1997,
  \url{https://doi.org/10.1137/1.9781611970937}.

\bibitem{StrakosGreenbaumPtakSIMAX}
{\sc A.~Greenbaum, V.~Pták, and Z.~Strako\v{s}}, {\em {A}ny {N}onincreasing
  {C}onvergence {C}urve is {P}ossible for {GMRES}}, SIAM Journal on Matrix
  Analysis and Applications, 17 (1996), pp.~465--469,
  \url{https://doi.org/10.1137/S0895479894275030}.

\bibitem{HestenesSteifelPCG}
{\sc M.~R. Hestenes and E.~Stiefel}, {\em {M}ethod of {C}onjugate {G}radients
  for {S}olving {L}inear {S}ystems}, Journal of Research of the National Bureau
  of Standards, 49 (1952), pp.~409--436,
  \url{https://doi.org/10.6028/jres.049.044}.

\bibitem{IpsenSIAMSciComp}
{\sc I.~C.~F. Ipsen}, {\em {A} {N}ote on {P}reconditioning {N}on-{S}ymmetric
  {M}atrices}, SIAM Journal on Scientific Computing, 23 (2001), pp.~1050--1051,
  \url{https://doi.org/10.1137/S1064827500377435}.

\bibitem{KellerGouldWathenSIMAX}
{\sc C.~Keller, N.~I.~M. Gould, and A.~J. Wathen}, {\em {C}onstraint
  {P}reconditioning for {S}ymmetric {I}ndefinite {L}inear {S}ystems}, SIAM
  Journal on Matrix Analysis and Applications, 21 (2000), pp.~1300--1317,
  \url{https://doi.org/10.1137/S0895479899351805}.

\bibitem{SIAMOpt:Lietal}
{\sc X.~Li, D.~Sun, and K.~C. Toh}, {\em {A}n {A}symptotically {S}uperilinearly
  {C}onvergent {S}emismooth {N}ewton {A}ugmented {L}agrangian {M}ethod for
  {L}inear {P}rogramming}, SIAM Journal on Optimization, 30 (2020),
  pp.~2410--2440, \url{https://doi.org/10.1137/19M1251795}.

\bibitem{EJOR:MarosMeszaros}
{\sc I.~Maros and C.~M{\'e}sz{\'a}ros}, {\em {T}he {R}ole of the {A}ugmented
  {S}ystem in {I}nterior {P}oint {M}ethods}, European Journal of Operational
  Research, 107 (1998), pp.~720--736,
  \url{https://doi.org/10.1016/S0377-2217(97)00074-X}.

\bibitem{OMS:MarosMeszaros}
{\sc I.~Maros and C.~M\'esz\'aros}, {\em {A {R}epository of {C}onvex
  {Q}uadratic {P}rogramming {P}roblems}}, Optimization Methods and Software, 11
  (1999), pp.~671--681, \url{https://doi.org/10.1080/10556789908805768}.

\bibitem{MurphyGolubWathenSIAMSciComp}
{\sc M.~F. Murphy, G.~H. Golub, and A.~J. Wathen}, {\em {A} {N}ote on
  {P}reconditioning for {I}ndefinite {L}inear {S}ystems}, SIAM Journal on
  Scientific Computing, 21 (2000), pp.~1969--1972,
  \url{https://doi.org/10.1137/S1064827599355153}.

\bibitem{Netlib_collection}
{\sc Netlib}.
\newblock \url{http://netlib.org/lp}, 2011.

\bibitem{NotaySIAM}
{\sc Y.~Notay}, {\em {A} {N}ew {A}nalysis of {B}lock {P}reconditioners for
  {S}addle {P}oint {S}ystems}, SIAM Journal on Matrix Analysis and
  Applications, 35 (2014), pp.~143--173,
  \url{https://doi.org/10.1137/130911962}.

\bibitem{OLIVSORESLAA}
{\sc A.~R.~L. Oliveira and D.~C. Sorensen}, {\em {A} {N}ew {C}lass of
  {P}reconditioners for {L}arge-{S}cale {L}inear {S}ystems from {I}nterior
  {P}oint {M}ethods for {L}inear {P}rogramming}, Linear Algebra and its
  Applications, 394 (2005), pp.~1--24,
  \url{https://doi.org/10.1016/j.laa.2004.08.019}.

\bibitem{OrbanNumAlg}
{\sc D.~Orban}, {\em {L}imited-{M}emory {LD$L^T$} {F}actorization of
  {S}ymmetric {Q}uasi-{D}efinite {M}atrices with {A}pplication to {C}onstrained
  {O}ptimization}, Numerical Algorithms, 70 (2015), pp.~9--41,
  \url{https://doi.org/10.1007/s11075-014-9933-x}.

\bibitem{PaigeSaundersSIAMNumAnal}
{\sc C.~C. Paige and M.~A. Saunders}, {\em {S}olution of {S}parse {I}ndefinite
  {S}ystems of {L}inear {E}quations}, SIAM Journal on Numerical Analysis, 12
  (1975), pp.~617--629, \url{https://doi.org/10.1137/0712047}.

\bibitem{PearsonGondzioNumMath}
{\sc J.~W. Pearson and J.~Gondzio}, {\em {F}ast {I}nterior {P}oint {S}olution
  of {Q}uadratic {P}rogramming {P}roblems {A}rising from {PDE}-{C}onstrained
  {O}ptimization}, Numerische Mathematik, 137 (2017), pp.~959--999,
  \url{https://doi.org/10.1007/s00211-017-0892-8}.

\bibitem{PearsonPestanaGAMM}
{\sc J.~W. Pearson and J.~Pestana}, {\em {P}reconditioners for {K}rylov
  {S}ubspace {M}ethods: {A}n {O}verview}, GAMM-Mitteilungen, 43 (2020),
  p.~e202000015, \url{https://doi.org/10.1002/gamm.202000015}.

\bibitem{PearsonPorcelliStollNLAA}
{\sc J.~W. Pearson, M.~Porcelli, and M.~Stoll}, {\em {I}nterior-{P}oint
  {M}ethods and {P}reconditioning for {PDE}-{C}onstrained {O}ptimization
  {P}roblems {I}nvolving {S}parsity {T}erms}, Numerical Linear Algebra with
  Applications, 27 (2019), p.~e2276, \url{https://doi.org/10.1002/nla.2276}.

\bibitem{JOTA:PougGond}
{\sc S.~Pougkakiotis and J.~Gondzio}, {\em {D}ynamic {N}on-{D}iagonal
  {R}egularization in {I}nterior {P}oint {M}ethods for {L}inear and {C}onvex
  {Q}uadratic {P}rogramming}, Journal of Optimization Theory and Applications,
  181 (2019), pp.~905--945, \url{https://doi.org/10.1007/s10957-019-01491-1}.

\bibitem{exIP-PMM:PougGond}
{\sc S.~Pougkakiotis and J.~Gondzio}, {\em {A}n {I}nterior {P}oint-{P}roximal
  {M}ethod of {M}ultipliers for {C}onvex {Q}uadratic {P}rogramming},
  Computational Optimization and Applications, 78 (2021), pp.~307--351,
  \url{https://doi.org/10.1007/s10589-020-00240-9}.

\bibitem{IP-PMM_SDP:PougGond}
{\sc S.~Pougkakiotis and J.~Gondzio}, {\em {A}n {I}nterior {P}oint-{P}roximal
  {M}ethod of {M}ultipliers for {L}inear {P}ositive {S}emi-{D}efinite
  {P}rogramming}, Journal of Optimization Theory and Applications, 192 (2022),
  pp.~97--129, \url{https://doi.org/10.1007/s10957-021-01954-4}.

\bibitem{SaadSchultzSIAMSciComp}
{\sc Y.~Saad and M.~H. Schultz}, {\em {GMRES}: {A} {G}eneralized {M}inimal
  {R}esidual {A}lgorithm for {S}olving {N}onsymmetric {L}inear {S}ystems}, SIAM
  Journal on Scientific Computing, 7 (1986), pp.~856--869,
  \url{https://doi.org/10.1137/09070583}.

\bibitem{REG:SaundersTomlin:TechReport}
{\sc M.~Saunders and J.~A. Tomlin}, {\em Solving {R}egularized {L}inear
  {P}rograms {U}sing {B}arrier {M}ethods and {KKT} {S}ystems}, Tech. Report SOL
  96-4, Systems Optimization Laboratory, Department of Operations Research,
  Stanford University, Stanford, CA 94305, USA, December 1996.

\bibitem{ScottTumaNumAlg}
{\sc J.~Scott and M.~T\r{u}ma}, {\em {A} {S}chur {C}omplement {A}pproach to
  {P}reconditioning {S}parse {L}inear {L}east-{S}quares {P}roblems with some
  {D}ense {R}ows}, Numerical Algorithms, 79 (2018), pp.~1147--1168,
  \url{https://doi.org/10.1007/s11075-018-0478-2}.

\bibitem{SilvWathenSIAMNumAnal}
{\sc D.~J. Silvester and A.~J. Wathen}, {\em {F}ast {I}terative {S}olution of
  {S}tabilized {S}tokes {S}ystems {P}art {II}: {U}sing {G}eneral {B}lock
  {P}reconditioners}, SIAM Journal on Numerical Analysis, 31 (1994),
  pp.~1352--1367, \url{https://doi.org/10.1137/0731070}.

\bibitem{SIAMOptVander}
{\sc R.~J. Vanderbei}, {\em {S}ymmetric {Q}uasidefinite {M}atrices}, SIAM
  Journal on Optimization, 5 (1995), pp.~100--113,
  \url{https://doi.org/10.1137/0805005}.

\bibitem{WathenIMANumAnal}
{\sc A.~J. Wathen}, {\em {R}ealistic {E}igenvalue {B}ounds for the {G}alerkin
  {M}ass {M}atrix}, IMA Journal of Numerical Analysis, 7 (1987), pp.~449--457,
  \url{https://doi.org/10.1093/imanum/7.4.449}.

\bibitem{WathenActaNum}
{\sc A.~J. Wathen}, {\em {P}reconditioning}, Acta Numerica, 24 (2015),
  pp.~329--376, \url{https://doi.org/10.1017/S0962492915000021}.

\end{thebibliography}
\bibliographystyle{siamplain}

\end{document}